\documentclass[12pt,reqno]{amsart}

\usepackage{amsbsy,amsfonts,amsmath,amssymb,amscd,amsthm,mathrsfs,verbatim,calc}
\usepackage{graphicx,epsfig,color}
\usepackage{enumitem}
\usepackage[a4paper,text={6.1in,8in},centering,includefoot,foot=1in]{geometry}
\usepackage[colorlinks=true, linkcolor=blue, citecolor=blue, urlcolor=blue, backref]{hyperref}

\bibliographystyle{plain}
\bibliography{yourbibfile}

\small\normalsize
\addtolength{\itemsep}{-.2in}
\addtolength{\parsep}{-.2in}
\setlength{\parskip}{.1in}
\setlength{\textheight}{23.2cm}

\theoremstyle{plain}

\newtheorem{theorem}{Theorem}[section]
\newtheorem{lemma}[theorem]{Lemma}

\newtheorem{proposition}[theorem]{Proposition}
\newtheorem{corollary}[theorem]{Corollary}
\newtheorem{definition}[theorem]{Definition}
\newtheorem{definitions}[theorem]{Definitions}

\theoremstyle{definition}

\newtheorem{remarks}[theorem]{Remarks}
\newtheorem{example}[theorem]{Example}

\numberwithin{equation}{section}
\setlength{\marginparwidth}{0.8in}
\let\oldmarginpar\marginpar
\renewcommand\marginpar[1]{\-\oldmarginpar[\raggedleft\footnotesize \textcolor{red}{#1}]{\raggedright\footnotesize\textcolor{red}{#1}}}

\bibliographystyle{plain}

\begin{document}
\title[Generalizations of interval and proper interval graphs]{Generalizations of interval and proper interval graphs for simplicial complexes}

\author[F. Khosh-Ahang]{Fahimeh Khosh-Ahang Ghasr}
\address{{Department of Mathematics, Ilam University,
P.O.Box 69315-516, Ilam, Iran.}}
\email{f.khoshahang@ilam.ac.ir and fahime$_{-}$khosh@yahoo.com}

\begin{abstract}
We introduce and investigate generalizations of interval and proper interval graphs to simplicial complexes, including strong interval, unit interval, and under closed variants. Through equivalent combinatorial and algebraic characterizations, we uncover hierarchies among these classes and extend key results to higher dimensions, such as the equivalence of closed and proper interval graphs. These formulations enable significant applications, including finding conditions for the sortability of $d$-independence complexes, constructions of normal Cohen-Macaulay domains linked to $d$-unit interval graphs, and forbidden subgraph theorems establishing chordality and $d$-claw-freeness. Our work advances the connections between graph theory, simplicial complexes, and commutative algebra, offering new insights into the algebraic underpinnings of combinatorial structures.
\end{abstract}

\subjclass[2010]{ 05C40, 05C75, 05E45, 13H10}







\keywords{interval graph, proper interval graph, strong interval simplicial complex, under closed simplicial complex, unit interval simplicial complex.}

\maketitle
\setcounter{tocdepth}{1}

\section{Introduction and Preliminaries}
In this paper, we explore connections between graph theory, simplicial complexes, and commutative algebra, focusing on generalizations of interval graphs and their higher-dimensional independence complexes. These structures arise naturally in various applications, including graph algorithms, bioinformatics, and algebraic combinatorics, and have been extensively studied over the past several decades \cite{B+S+V, Herzog2, Gardi, Herzog1, Fahimeh, R2}.

We begin with some notation and definitions. Throughout, $G = (V(G), E(G))$ denotes a simple graph on $n$ vertices, $d \in \mathbb{N}$, and $\mathbb{K}$ is a field. Let $\Delta$ be a pure $d$-dimensional simplicial complex (briefly, a $d$-complex) on $n$ vertices, with facet set denoted by $\mathcal{F}(\Delta)$.  We write any subset $F=\{i_1, \dots , i_r\}$ of $[n]$ with $1\leq i_1 < \dots < i_r \leq n$ as $F: i_1 \dots i_r$.   For an integer $0\leq k \leq d$, $k$-skeleton of $\Delta$ is the simplicial complex whose facets are all $k$-dimensional faces of $\Delta$. $\Delta$ is called connected if for every two vertices $u$ and $v$ there is a sequence $r_0, G_1, r_1, \dots , r_{s-1}, G_s, r_s$, with $u=r_0$ and $v=r_s$ such that for each $k$ with $1\leq k \leq s$, $r_k$ is a vertex of $\Delta$ and $G_k$ is a facet containing $r_{k-1}$ and $r_k$. For a subset $U = \{u_1, \dots, u_k\}$ of $V(G)$, the induced subgraph of $G$ on $U$ is denoted by $G[U]$ or $G[u_1, \dots, u_k]$. A \textbf{clique} in $G$ is the vertex set of any complete subgraph, and a \textbf{maximal clique} is one that is maximal under inclusion. A \textbf{noncut vertex} of $G$ is a vertex whose removal doesn't effect the number of connected components of $G$. The path and cycle graphs on $n$ vertices are denoted by $P_n$ and $C_n$, respectively. An \textbf{induced cycle} of length $k$ in $G$ is formed by distinct vertices $x_1, \dots, x_k$ such that $G[x_1, \dots, x_k] \cong C_k$; we denote this cycle by $(x_1, \dots, x_k)$ if $x_1 x_k \in E(G)$ and $x_i x_{i+1} \in E(G)$ for $1 \leq i \leq k-1$. A subset $U $ of $V(G)$ is a \textbf{$d$-independent set} if every connected component of $G[U]$ has at most $d$ vertices. The union of graphs $G$ and $H$ (with possibly overlapping vertices) is denoted by $G \cup H$. 

We associate to $G$ the simplicial complex $\Delta_d(G)$ on vertex set $V(G)$, whose facets are all $(d+1)$-element subsets $U $ of $V(G)$ such that $G[U]$ is connected. For $d=1$, the facet ideal of $\Delta_d(G)$ coincides with the edge ideal of $G$.
The \textbf{independence complex} $\mathrm{Ind}(G)$ of $G$ is the simplicial complex whose faces are the independent sets of $G$ (i.e., $1$-independent sets). More generally, the \textbf{$d$-independence complex} $\mathrm{Ind}_d(G)$ has vertices $V(G)$ and faces consisting of all $d$-independent subsets of $V(G)$. While the algebraic and combinatorial properties of $\mathrm{Ind}(G)$ have been well-explored,
higher independence complexes $\mathrm{Ind}_d(G)$ for $d \geq 2$ are less understood, with recent advances in \cite{DSS, DS, Roy}. 

Interval graphs, defined as intersection graphs of intervals on a line, form a fundamental class with applications in scheduling, genetics, and beyond \cite{Gardi, G, R2}. A graph $G$ is an \textbf{interval graph} if its vertices can be labeled by nonempty intervals such that two vertices are adjacent precisely when their intervals intersect. A \textbf{proper interval graph} is one where no interval properly contains another, and a \textbf{unit interval graph} requires all intervals to have the same length. Equivalently, as introduced in \cite{Herzog1} to study binomial edge ideals with quadratic Gr\"{o}bner bases, a graph $G$ is \textbf{closed} if there exists a labeling $[n]$ of its vertices such that for edges $ij$ and $ik$ with $i < j < k$, we have $jk \in E(G)$, and for edges $ij$ and $kj$ with $i < k < j$, we have $ik \in E(G)$.  There are generalizations of these graphs for simplicial complexes, such as unit interval, closed, under closed, poor closed, etc. \cite{B+S+V, Fahimeh2} provide hierarchies of such generalizations. In this work, we study generalization of interval graphs, including strong interval, unit interval and under closed, for simplicial complexes (see Definitions \ref{1.1}). We establish hierarchies and equivalences among these classes of simplicial complexes in Lemma \ref{1.2} and Theorem \ref{1.4}.  These relationships lead to conditions for  sortability of $d$-independence complexes (Corollary \ref{B}), constructions of normal Cohen-Macaulay domains (Corollary~\ref{CM}), and characterizations of interval graphs (Theorem~\ref{A}). Moreover, Theorem \ref{Prop} extends the main result of \cite{C+R} to simplicial complexes, showing that closed and proper interval graphs coincide. 

We also derive forbidden subgraph results, including chordality and claw-freeness (Propositions~\ref{1.5} and~\ref{1.7}), culminating in sortability of $\mathrm{Ind}_d(G)$ (Corollary~\ref{2.5}) and characterizations of $d$-unit interval cycles and forests (Corollary~\ref{2.4}). So we extend \cite{C+R}'s result to simplicial complexes and provide new characterizations beyond cycles/forests.

\section{Variant of Interval Simplicial Complexes}
Interval graphs admit natural generalizations to higher-dimensional simplicial complexes, capturing richer combinatorial structures with algebraic implications. We study several such generalizations, including strong interval, unit interval and under closed simplicial complexes, to extend the framework of interval graphs and explore their properties. These definitions, inspired by \cite{B+S+V}, facilitate the study of higher independence complexes and their associated algebras.

\begin{definitions}\label{1.1}
\begin{enumerate}
\item $\Delta$ is called a \textbf{strong interval simplicial complex} if there exists a labelling $[n]$ on $V(\Delta)$ and a family $\{I_i\}_{i\in [n]}$ of intervals such that for each $1\leq i_1<\dots <i_{d+1} \leq n$,  $i_1 \dots  i_{d+1}\in \mathcal{F}(\Delta )$ if and only if $\bigcup _{1\leq k \leq d+1}I_{i_k}$ is an interval. In this case $\{I_i\}_{i\in [n]}$ is called an \textbf{interval representation} for $\Delta$ and for each $i\in [n]$, if $I_i=[a_i, b_i]$, then we set $\mathrm{left}I_i=a_i$ and $\mathrm{right}I_i=b_i$. 
    If all the intervals in an interval representation have unit length, $\Delta$ is called a \textbf{strong unit interval simplicial complex}. If no interval is properly contained in another, $\Delta$ is called a \textbf{strong proper interval simplicial complex}.

\item (\cite[Definition 30]{B+S+V}) $\Delta$ is called a \textbf{unit interval simplicial complex} if there is a labelling $[n]$ on $V(\Delta)$ such that for each facet $F: i_1\dots i_{d+1}$, every integers $j_1, \dots, j_{d+1}$ with $i_1\leq j_1<\dots <j_{d+1}\leq i_{d+1}$ form a facet.
    
\item (\cite[Definition 31]{B+S+V}) $\Delta$ is called a \textbf{under closed simplicial complex} if there is a labelling $[n]$ on $V(\Delta)$ such that for each facet $F: i_1\dots i_{d+1}$, every integers $j_1, \dots, j_{d+1}$ with $j_1=i_1, j_2\leq i_2, j_{d+1}\leq i_{d+1}$ form a facet. 
\end{enumerate}
$G$ is called $d$-strong interval (resp. $d$-strong unit interval, $d$-strong proper interval,  $d$-unit interval, $d$-under closed) if $\Delta_d(G)$ is so.
\end{definitions}

Here we made some observations in this direction:
\begin{remarks}\label{remarks1.2}
\begin{enumerate}
\item In an interval representation, one can consider the intervals in real line. In addition, one may assume that all intervals in an interval representation are closed.

\item $1$-strong interval graphs and $1$-under closed graphs are precisely interval graphs (\cite[Theorem 4]{Ol}). 

\item For a connected graph $G$,  $1$-unit interval is the same as proper interval and closed  (cf. \cite[Proposition 1 and Theorem 1]{LO} and \cite[Corollary 2.5]{C+R}).

\item A graph $G$ is $d$-strong interval (resp. $d$-unit interval, $d$-under closed) if and only if all its connected components are so. The same holds for simplicial complexes.

\item Multiple $d$-complexes may correspond to the same labeling for $d$-under closed or $d$-strong interval graphs (e.g., $C_4$ and $K_4$ are both $3$-under closed and $3$-strong interval graphs with the same interval representation).
\end{enumerate}
\end{remarks}

To understand the hierarchy among these new classes, we first generalize a result from \cite[Theorem 4]{Ol} to higher-dimensional complexes. To this aim firstly we bring an equivalent description for under closed $d$-complexes.

\begin{lemma}\label{global}
A $d$-complex $\Delta$ is under closed if and only if there is a labelling $[n]$ on $V(\Delta)$ such that  for each facet $F: i_1\dots i_{d+1}$ and each $j$ with $i_1\leq j\leq i_{d+1}, j\notin F$, we have $\{i_1, \dots , i_d, j\}\in \mathcal{F}(\Delta)$.
\end{lemma}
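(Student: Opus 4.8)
The plan is to prove the two implications separately. The guiding idea is to read the displayed condition as a single elementary ``push‑down'' move and to compare its transitive closure against the full dominance closure built into Definition \ref{1.1}(3); the forward implication is then routine, while the converse is where the work lies.

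For the forward direction (an under closed complex satisfies the displayed condition) I would fix the labelling $[n]$ witnessing under closedness, take a facet $F\colon i_1\dots i_{d+1}$ and an integer $j$ with $i_1\le j\le i_{d+1}$ and $j\notin F$, and simply verify that the sorted tuple $(m_1<\dots<m_{d+1})$ of the set $\{i_1,\dots,i_d,j\}$ is dominated by $F$ in the exact sense of Definition \ref{1.1}(3): $m_1=i_1$ and $m_k\le i_k$ for every $k$. Since $j\ge i_1$ we have $m_1=i_1$, and the inequalities follow from a one‑line case split on where $j$ lands. If $i_d<j<i_{d+1}$ the tuple is $(i_1,\dots,i_d,j)$ and domination is immediate; if $i_\ell<j<i_{\ell+1}$ for some $1\le\ell\le d-1$, then inserting $j$ and shifting $i_{\ell+1},\dots,i_d$ one slot to the right keeps every coordinate no larger than the corresponding $i_k$, because the $i_k$ are increasing. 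Under closedness then yields $\{i_1,\dots,i_d,j\}\in\mathcal F(\Delta)$.

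The substantive direction is the converse. Assuming the displayed condition, I take a facet $F\colon i_1\dots i_{d+1}$ together with a tuple $J=(j_1<\dots<j_{d+1})$ satisfying $j_1=i_1$ and $j_k\le i_k$ for all $k$, and must show $J\in\mathcal F(\Delta)$. I would induct on the total displacement $\delta(F,J)=\sum_{k=1}^{d+1}(i_k-j_k)\ge 0$, the base case $\delta=0$ giving $J=F$. For the inductive step I locate the smallest index $k_0\ge 2$ with $j_{k_0}<i_{k_0}$ and replace the entry $i_{k_0}$ of $F$ by $i_{k_0}-1$ (an instance of the displayed condition with $j=i_{k_0}-1$). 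This produces a facet $F'$ that still dominates $J$ and has $\delta(F',J)=\delta(F,J)-1$, after which the induction hypothesis applies to $F'$ and $J$.

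The main obstacle is exactly the verification concealed in that step, namely that a one‑unit push‑down is always legal at the least disagreement. Concretely I must show $i_{k_0}-1\notin F$, i.e. $i_{k_0}>i_{k_0-1}+1$: minimality of $k_0$ gives $i_{k_0-1}=j_{k_0-1}$, and then $j_{k_0}>j_{k_0-1}=i_{k_0-1}$ forces $i_{k_0}>j_{k_0}\ge i_{k_0-1}+1$, supplying the needed room. The new value $i_{k_0}-1\ge j_{k_0}$ preserves domination of $J$ from below, while $i_{k_0}-1>i_{k_0-1}$ together with the untouched first coordinate keeps the tuple strictly increasing with $j_1$ fixed. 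I expect the only genuinely delicate points to be bookkeeping: confirming that the replacement stays in its own ``slot'' (so that $F'$ is really a facet obtained from $F$ by the displayed move, not a reshuffling that removes the maximal entry), and checking that always acting at the least disagreement never disturbs the already‑matched upper coordinates. Both follow from the minimality of $k_0$, and it is this slot‑preservation that I regard as the crux of the argument.
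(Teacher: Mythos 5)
Your forward direction is correct, and it is the same computation as the paper's: sort $\{i_1,\dots,i_d,j\}$ and check that the resulting tuple is dominated by $F$ in the sense of Definition \ref{1.1}(3). The converse, however, rests on a misreading of the elementary move. Applied to a facet $F\colon i_1\dots i_{d+1}$ with $j=i_{k_0}-1$, the displayed condition yields the set $\{i_1,\dots,i_d,j\}$: it \emph{always} deletes the top element $i_{d+1}$ and inserts $j$; it never ``replaces the entry $i_{k_0}$ of $F$ by $i_{k_0}-1$'' when $k_0\le d$. What the hypothesis actually gives you is the facet $(i_1,\dots,i_{k_0-1},\,i_{k_0}-1,\,i_{k_0},\dots,i_d)$, whose largest entry is $i_d$, so domination of $J$ is destroyed whenever $j_{d+1}>i_d$ and your induction on $\delta(F,J)$ stalls. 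Concretely, with $d=2$, $F=(1,3,5)$ and $J=(1,2,4)$ you get $k_0=2$ and $j=2$, and the move delivers $\{1,2,3\}$, not your intended $(1,2,5)$. The ``slot-preservation'' that you yourself single out as the crux is exactly what the condition does not provide, and minimality of $k_0$ does not rescue it.

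Moreover, the gap cannot be repaired by better bookkeeping at a fixed labelling, because the transitive closure of the stated move is in general strictly smaller than the dominance ideal. The pure $2$-complex $\Delta=\langle \{1,2,3\},\{1,3,4\},\{1,3,5\}\rangle$ on $[5]$ satisfies the displayed condition for the identity labelling (from $\{1,3,5\}$ the admissible moves produce exactly $\{1,2,3\}$ and $\{1,3,4\}$, from $\{1,3,4\}$ they produce $\{1,2,3\}$, and $\{1,2,3\}$ admits no $j$), yet $(1,2,4)$ is dominated by $(1,3,5)$ and is not a facet; so ``condition $\Rightarrow$ under closed'' fails with the labelling held fixed (the complex does become under closed after relabelling the two vertices common to all facets as $1$ and $2$, so this refutes the proof method rather than the biconditional itself). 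You should also know that the paper's own proof of this direction is your argument in mild disguise: it sets $s=\min\{\ell \mid j_\ell\notin F\}$ and iterates $F\mapsto F'=\{i_1,\dots,i_d,j_s\}$, and it founders at the same point, since in the example above $F'=(1,2,3)$ no longer dominates $J=(1,2,4)$. A complete proof would have to either construct a new labelling or work with a stronger exchange move (for instance deleting the smallest element of $F$ exceeding $j$ rather than $i_{d+1}$, which does generate all five dominated tuples in the example); as written, both your inductive step and the paper's iteration are unjustified, and your instinct that this was the delicate point was right.
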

\begin{proof}
$(\Longleftarrow )$ Consider a facet $F: i_1\dots i_{d+1}$ and an integer $j$ with $i_1\leq j\leq i_{d+1}, j\notin F$. Assume $\{j_1, \dots , j_{d+1}\}=\{i_1, \dots , i_d, j\}$, where $j_1< \dots < j_{d+1}$. Thus there exists an integer $k$ with $2\leq k \leq d+1$ such that $j_\ell =i_\ell$ for all $1\leq \ell \leq k-1$, $j_k=j$ and $j_\ell=i_{\ell -1}$ for all $k+1\leq \ell \leq d+1$. This shows $j_1=i_1, j_2\leq i_2, \dots , j_{d+1}\leq i_{d+1}$. So $\{i_1, \dots , i_d, j\}\in \mathcal{F}(\Delta)$ by under closedness.

$(\Longrightarrow )$ Consider a facet $F: i_1\dots i_{d+1}$ and integers $j_1, \dots , j_{d+1}$ with $j_1=i_1, j_2\leq i_2, \dots , j_{d+1}\leq i_{d+1}$. Let $s=\min \{\ell \ | \ j_\ell \notin F\}$. Then by assumption, $F'=\{i_1, \dots, i_d, j_s\}\in \mathcal{F}(\Delta )$. By replacing $F$ with $F'$  and iterating this way, we obtain $j_1 \dots j_{d+1}\in \mathcal{F}(\Delta )$ as desired.
\end{proof}

\begin{lemma}\label{1.2}
Every strong interval $d$-complex is under closed.
\end{lemma}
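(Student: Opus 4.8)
The plan is to invoke the equivalent description of under closedness furnished by Lemma \ref{global}. Fix an interval representation $\{I_i\}_{i\in[n]}$ witnessing that $\Delta$ is strong interval, and write $I_i=[a_i,b_i]$. The strong interval condition asserts exactly that a $(d+1)$-subset $S\subseteq V(\Delta)$ is a facet if and only if $\bigcup_{i\in S}I_i$ is a single interval, a statement that makes no reference to the labelling. I would therefore relabel the vertices by nondecreasing left endpoint, so that $a_1\le a_2\le\cdots\le a_n$ (ties broken arbitrarily), and verify the criterion of Lemma \ref{global} for this labelling: for every facet $F:i_1\dots i_{d+1}$ and every $j$ with $i_1\le j\le i_{d+1}$ and $j\notin F$, the set $\{i_1,\dots,i_d,j\}$ must be a facet, i.e.\ $I_{i_1}\cup\cdots\cup I_{i_d}\cup I_j$ must be an interval.

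The argument then splits into two steps. First I would show that $U':=I_{i_1}\cup\cdots\cup I_{i_d}$, the union obtained by discarding $I_{i_{d+1}}$ (the interval of largest left endpoint), is already an interval. Suppose not; then $U'$ has a gap, i.e.\ a point $p$ lying strictly between two of its connected components, so that $p\notin U'$ while $U'$ has points on both sides of $p$. Any interval $I_{i_k}$ with $k\le d$ contributing a point to the right of $p$ must satisfy $a_k>p$, since $p\notin I_{i_k}$. On the other hand, because $F$ is a facet, $U=U'\cup I_{i_{d+1}}$ is a single interval, so $p\in U$ and hence $p\in I_{i_{d+1}}$, forcing $a_{d+1}\le p$. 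But the chosen labelling makes $a_{d+1}=\max_k a_k\ge a_k>p$, contradicting $a_{d+1}\le p$. Thus $U'$ is an interval $[a_1,B']$ with $B'=\max_{1\le k\le d}b_k$, and connectivity of $U$ forces $a_{d+1}\le B'$.

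Second, I would add back $I_j$. The inequalities $i_1\le j\le i_{d+1}$ translate into $a_1\le a_j\le a_{d+1}\le B'$, so the left endpoint $a_j$ of $I_j$ lies in $[a_1,B']=U'$. Hence $U'\cup I_j=[a_1,\max(B',b_j)]$ is again a single interval, and by the strong interval criterion $\{i_1,\dots,i_d,j\}\in\mathcal{F}(\Delta)$. Lemma \ref{global} then yields that $\Delta$ is under closed, as desired.

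The crux of the proof, and the step I expect to require the most care, is the coupling of the choice of labelling with the gap argument in the first step: one must order the vertices by left endpoint precisely so that the largest label corresponds to the interval that can be deleted without disconnecting the left portion of the union, and then argue that no gap can survive in $U'$ after $I_{i_{d+1}}$ is removed. The remaining points—the legitimacy of relabelling (the facet condition being label-free), the handling of ties in the left endpoints, and the convention that closed intervals meeting in a single point are connected—are routine, but should be stated explicitly.
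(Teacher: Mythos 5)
Your proof is correct and takes essentially the same route as the paper: relabel the vertices by nondecreasing left endpoint, show that $\bigl(\bigcup_{1\le k\le d}I_{i_k}\bigr)\cup I_j$ is an interval because $\mathrm{left}I_j$ lies inside the union of the first $d$ intervals, and conclude via Lemma \ref{global}. Your explicit gap argument showing $U'=\bigcup_{1\le k\le d}I_{i_k}$ is itself an interval actually makes rigorous a step the paper leaves implicit; the only blemishes are notational slips (writing $a_k$, $b_k$, $a_{d+1}$ where $a_{i_k}$, $b_{i_k}$, $a_{i_{d+1}}$ are meant), which do not affect the argument.
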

\begin{proof}
Suppose $\{I_i\}_{i\in [n]}$ is an interval representation for $\Delta$. We relabel the vertices of $\Delta$ by $[n]$ in such a way that $i<j$ if either $\mathrm{left}I_i<\mathrm{left}I_j$ or $\mathrm{left}I_i=\mathrm{left}I_j, \mathrm{right}I_i\leq\mathrm{right}I_j$. Now assume that $F:  i_1\dots i_{d+1}\in \mathcal{F}(\Delta )$, $i_1\leq j\leq i_{d+1}$ and  $j\notin F$. Since $F\in \mathcal{F}(\Delta)$, $\bigcup_{1\leq k \leq d+1}I_{i_k}$ is an interval. Also we know that $ \mathrm{left}I_{i_1}\leq \dots \leq \mathrm{left}I_{i_{d+1}}$. So we should have $\mathrm{left}I_{i_{d+1}}\leq \mathrm{right}I_{i_\ell}$ for some $1\leq \ell \leq d$. Since $\mathrm{left}I_j\leq \mathrm{left}I_{i_{d+1}}$, $\mathrm{left}I_{j}\leq \mathrm{right}I_{i_\ell}$. Therefore $(\bigcup_{1\leq k \leq d} I_{i_k})\cup I_j$ is an interval and so $\{i_1, \dots , i_d, j\}\in \mathcal{F}(\Delta )$ as desired. Now Lemma \ref{global} completes the proof.
\end{proof}

We next extend results from \cite{C+R} to higher dimensions. We need the following lemma for this task.

\begin{lemma}\label{lem2.4}
The following statements are equivalent:
\begin{itemize}
  \item[(1)] $\Delta$ is unit interval.
  \item[(2)]  There is a labelling $[n]$ on $V(\Delta)$ such that for each facet $F: i_1\dots i_{d+1}$ and each $j$ with $i_1\leq j\leq i_{d+1}, j\notin F$, $j$ and $i_k$ belong to a facet and $\{i_1, \dots,  \widehat{i_k}, \dots, i_{d+1}, j\}\in \mathcal{F}(\Delta )$  for all $k$ with $1\leq k \leq d+1$. 
  \item[(3)] There is a labelling $[n]$ on $V(\Delta)$ such that for each facet $F: i_1\dots i_{d+1}$ and each $j$ with $i_1\leq j\leq i_{d+1}, j\notin F$, $j$ and $i_k$ belong to a facet for some $k$ with $1\leq k \leq d+1$, and for such integers $k$ we have  $\{i_1, \dots,  \widehat{i_k}, \dots, i_{d+1}, j\}\in \mathcal{F}(\Delta )$.
\end{itemize}
\end{lemma}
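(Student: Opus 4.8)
The plan is to establish the cycle $(1)\Rightarrow(2)\Rightarrow(3)\Rightarrow(1)$, fixing in each implication the labelling $[n]$ that witnesses the hypothesis. The implication $(1)\Rightarrow(2)$ is immediate from the definition of a unit interval complex (Definition \ref{1.1}): given a facet $F: i_1\dots i_{d+1}$ and $j$ with $i_1\leq j\leq i_{d+1}$, $j\notin F$, every set $\{i_1,\dots,\widehat{i_k},\dots,i_{d+1},j\}$ is a $(d+1)$-subset of $\{i_1,i_1+1,\dots,i_{d+1}\}$ and hence a facet; this gives the second conclusion of (2), and taking any $m\neq k$ (possible since $d+1\geq2$) the facet $\{i_1,\dots,\widehat{i_m},\dots,i_{d+1},j\}$ contains both $i_k$ and $j$, giving the first. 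The implication $(2)\Rightarrow(3)$ holds trivially with the same labelling, since the conclusions of (2) hold for all $k$, hence in particular for some $k$ and for every $k$ for which $j$ and $i_k$ share a facet.

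For $(3)\Rightarrow(1)$ I first argue that (3) in fact holds for all $k$, i.e.\ it upgrades to the swap property in (2). Indeed, for a facet $F: i_1\dots i_{d+1}$ and $j$ with $i_1\leq j\leq i_{d+1}$, $j\notin F$, condition (3) yields some $k_0$ with $G:=\{i_1,\dots,\widehat{i_{k_0}},\dots,i_{d+1},j\}\in\mathcal{F}(\Delta)$. Now $G$ contains $j$ together with every $i_m$, $m\neq k_0$, so $j$ and $i_m$ share the facet $G$ for each such $m$; together with $m=k_0$ this shows $j$ and $i_m$ share a facet for all $m$. Re-entering the ``for such $k$'' clause of (3) then gives $\{i_1,\dots,\widehat{i_k},\dots,i_{d+1},j\}\in\mathcal{F}(\Delta)$ for every $k$. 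In other words, from any facet $F$ one may replace any single vertex by any element of $[\min F,\max F]\setminus F$ and still obtain a facet.

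It remains to deduce the full unit interval property, and this is the step I expect to be the main obstacle. Fix a facet $F$ with $\min F=i_1$, $\max F=i_{d+1}$ and an arbitrary $(d+1)$-subset $J$ of $[i_1,i_{d+1}]$; I show $J\in\mathcal{F}(\Delta)$ by induction on $N:=|J\setminus F|$, the case $N=0$ being $J=F$. For $N\geq1$ I perform one swap of the type just licensed, chosen so as to keep $J$ inside the range of the facet (which is exactly what is needed in order to keep applying the swap rule): if $\min J\notin F$ I replace $\min F$ by $\min J$; otherwise if $\max J\notin F$ I replace $\max F$ by $\max J$; and otherwise, with both extremes of $J$ already in $F$, I replace an arbitrary element of $F\setminus J$ by an arbitrary element of $J\setminus F$. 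In each case the inserted vertex lies in $[\min F,\max F]\setminus F$, so the resulting $F'$ is a facet; the endpoint of $J$ that could have escaped the range has just been inserted while the opposite endpoint is left untouched, so $J\subseteq[\min F',\max F']$; and $|J\setminus F'|=N-1$. The induction hypothesis applied to $F'$ then yields $J\in\mathcal{F}(\Delta)$, completing $(3)\Rightarrow(1)$.

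The only genuinely delicate point in the whole argument is this range bookkeeping in the exchange step: a careless swap could pull an endpoint of $J$ outside $[\min F',\max F']$ and leave the swap rule inapplicable, so the case split on whether $\min J$ and $\max J$ already belong to $F$ is essential. By contrast, the upgrade of (3) to an all-$k$ swap rule and the two opening implications are essentially formal.
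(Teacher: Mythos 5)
Your proof is correct and takes essentially the same route as the paper's: the formal implications $(1)\Rightarrow(2)\Rightarrow(3)$, the upgrade of (3) to the all-$k$ swap property (the paper's $(3\Rightarrow 2)$ step, argued identically via the facet $\{i_1,\dots,\widehat{i_{k_0}},\dots,i_{d+1},j\}$), and then unit intervality by iterated single-vertex exchanges (the paper's $(2\Rightarrow 1)$, which chooses the minimal indices $r,s$ and says ``by repeating this procedure''). Your endpoint-prioritized case split and explicit induction on $|J\setminus F|$ merely spell out the range invariant $J\subseteq[\min F',\max F']$ that the paper's minimal-index choice handles implicitly.
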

\begin{proof}
$(1\Longrightarrow 2)$  and $(2\Longrightarrow 3)$ are immediate.  

$(3 \Longrightarrow 2)$ If $\Delta$ satisfies (3), for any facet $F: i_1 \dots i_{d+1}$ and integer $j$ with $i_1 \leq j \leq i_{d+1}$, $j \notin F$, there exists a facet $H$ containing $j$ and $i_k$ for some $k$ and $\{i_1, \dots, \widehat{i_k}, \dots, i_{d+1}, j\}$ is a facet which contains $j$ and $i_{k'}$ for any $k' \neq k$. 

$(2\Longrightarrow 1)$ Assume $F: i_1\dots i_{d+1}$ is a facet of $\Delta$ and $ i_1\leq j_1< \dots <j_{d+1}\leq i_{d+1}$. Suppose that $s=\min \{\ell \ | \ j_\ell\notin \{i_1, \dots , i_{d+1}\}\}$ and $r=\min \{\ell \  | \ i_\ell \notin \{j_1, \dots , j_{d+1}\}\}$. By~(2) $F'=\{i_1, \dots , \widehat{i_r}, \dots , i_{d+1}, j_s\}\in \mathcal{F}(\Delta )$. By repeating this procedure for $F'$ instead of $F$ we arrive at the conclusion that $\{j_1, \dots , j_{d+1}\}$ is a facet as required.
\end{proof}

\begin{theorem}\label{Prop}
A $d$-dimensional simplicial complex $\Delta$ is unit interval if and only if  Condition $*$ satisfies. 

$*$: There is a labelling $[n]$ on $V(\Delta)$ such that for each facet $F: i_1\dots i_{d+1}$ and each $j$ with $i_1\leq j\leq i_{d+1}, j\notin F$, if there exists a facet containing $j$ and $i_k$ for some $1\leq k \leq d+1$, then  $\{i_1, \dots,  \widehat{i_k}, \dots, i_{d+1}, j\}\in \mathcal{F}(\Delta )$. 

In particular each graph $G$ is closed if and only if it is proper interval.
\end{theorem}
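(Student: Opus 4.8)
The plan is to place Condition~$*$ among the equivalent formulations of the unit interval property collected in Lemma~\ref{lem2.4}. The first observation is that $*$ is precisely statement~(3) of that lemma with its existential clause removed: (3) both asserts that some $i_k$ shares a facet with $j$ \emph{and} concludes $\{i_1,\dots,\widehat{i_k},\dots,i_{d+1},j\}\in\mathcal{F}(\Delta)$, whereas $*$ draws the same conclusion only on the hypothesis that such an $i_k$ exists. Hence any labelling witnessing~(3) also witnesses~$*$, and by Lemma~\ref{lem2.4} every unit interval $d$-complex satisfies~$*$. This settles the forward direction at once.

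For the converse the missing existential clause is exactly the difficulty, and it genuinely cannot be recovered for an arbitrary labelling: the complex on $\{1,2,3,4\}$ with facets $\{1,3\}$ and $\{2,4\}$ satisfies~$*$ under the labelling $1,2,3,4$ (every hypothesis of~$*$ is vacuous there) but fails~(3), since $2$ lies between $1$ and $3$ yet shares a facet with neither. My remedy is to pass to connected components: by Remark~\ref{remarks1.2}(4) it is enough to prove each component of $\Delta$ is unit interval, and a labelling satisfying~$*$ on $\Delta$ restricts to one on each component (for a vertex $j$ lying in a component different from that of $F$ the hypothesis of~$*$ is vacuous, while any facet realising the hypothesis inside a component is already a facet of $\Delta$).

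The crux is then to prove, for a connected $\Delta$ carrying a labelling that satisfies~$*$, that for every facet $F: i_1\dots i_{d+1}$ and every $j$ with $i_1\le j\le i_{d+1}$, $j\notin F$, some $i_k$ does share a facet with $j$. Granting this, $*$ upgrades verbatim to statement~(3) and Lemma~\ref{lem2.4} finishes the converse. I expect this to be the main obstacle. My approach is to use connectedness to choose a shortest chain of facets $j=r_0,G_1,r_1,\dots,G_s,r_s$ with $r_s\in F$ and to show $s=1$ by contradicting $s\ge 2$, applying~$*$ to transport a common facet along the chain toward $F$. The $d=1$ case already exhibits the mechanism and the essential role of connectedness: if $\{a,b\}\in E(G)$ with $a<j<b$ and $j$ were adjacent to neither $a$ nor $b$, a shortest path from $j$ to $\{a,b\}$ supplies an interior neighbour of $j$ to which the two implications of~$*$ (the cases $k=1$ and $k=2$) apply, forcing an edge from $j$ onto $a$ or $b$ and contradicting the path's length. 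The higher-dimensional version, which must track all $d+1$ coordinates simultaneously, is the delicate bookkeeping.

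Finally I would obtain the ``in particular'' clause by specialising to $d=1$, where the facets of $\Delta_1(G)$ are the edges of $G$. Writing out~$*$ in this case, the case $k=1$ becomes ``$i_1<j<i_2$ with $i_1i_2,\,i_1j\in E(G)$ implies $ji_2\in E(G)$'' and the case $k=2$ becomes ``$i_1<j<i_2$ with $i_1i_2,\,ji_2\in E(G)$ implies $i_1j\in E(G)$'', which are precisely the two defining implications of a closed labelling from~\cite{Herzog1}; thus~$*$ and closedness coincide as conditions on a labelling. Since a graph is $1$-unit interval if and only if it is proper interval---for connected graphs by Remark~\ref{remarks1.2}(3), and in general because closedness, proper-intervalness and the $1$-unit interval property are all componentwise---the main equivalence yields $G$ proper interval $\iff$ $G$ is $1$-unit interval $\iff$ $\Delta_1(G)$ satisfies~$*$ $\iff$ $G$ is closed, as asserted.
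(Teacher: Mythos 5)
Your framing is correct and in fact coincides with the paper's: the forward direction is immediate, the reduction to connected components is sound, and the crux is exactly what you isolate --- showing that in a connected complex satisfying $*$, every sandwiched $j\notin F$ shares a facet with some vertex of $F$, after which $*$ upgrades to statement (3) of Lemma~\ref{lem2.4} and that lemma finishes. But you have not proved the crux; you announce a strategy, verify a special configuration at $d=1$, and defer the rest as ``delicate bookkeeping''. Moreover the $d=1$ sketch contains an unjustified step: a shortest facet chain from $j$ to $F$ does \emph{not} in general ``supply an interior neighbour of $j$''. The first vertex $r$ on such a chain may lie outside the interval $[i_1,i_{d+1}]$, and then neither implication of $*$ applies to the facet through $j$ and $r$ unless you already know that certain pairs lying further to the right share a facet --- which is an instance of the very statement being proved. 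Even in the interior case, shortening a chain of length $s\ge 2$ requires knowing that intermediate pairs such as $(i_1,r)$ lie in a common facet, and $*$ alone does not give this.

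The paper breaks this circularity with an extremal device absent from your proposal: it chooses a counterexample pair with $i_{d+1}$ and $j$ \emph{largest}, takes a shortest chain $i_1=r_0,G_1,r_1,\dots,G_s,r_s=j$ (from $i_1$ to $j$, not from $j$ to $F$), and proves by descending induction that the chain is strictly increasing, $i_1=r_0<r_1<\dots<r_s=j<i_{d+1}$; the troublesome cases, e.g.\ $r_{s-1}>i_{d+1}$ or $j<r_{s-1}<i_{d+1}$, are eliminated precisely by maximality of the chosen pair, not by $*$ alone. Monotonicity then permits transporting facets up the chain, $H_t=\{i_2,\dots,i_{d+1},r_t\}\in\mathcal{F}(\Delta)$ for $t=1,\dots,s$, terminating in $\{i_2,\dots,i_{d+1},j\}\in\mathcal{F}(\Delta)$, the desired contradiction. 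Without this extremal choice and the accompanying induction, your plan of ``contradicting $s\ge 2$'' does not go through, so the main equivalence remains unproved. Your handling of the ``in particular'' clause (unwinding $*$ at $d=1$ into the two defining implications of a closed labelling, then invoking Remark~\ref{remarks1.2}(3) and componentwise reduction) is fine, and indeed more explicit than the paper's one-line remark, but it rests on the unestablished converse.
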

\begin{proof}
$(\Longrightarrow )$ is obvious.

$(\Longleftarrow )$ One can verify it is sufficient to prove the result for connected simplicial complex $\Delta$. Assume $\Delta $ satisfies $*$ while it is not unit interval. According to Lemma \ref{lem2.4}$(1 \Longleftrightarrow 3)$ there exists a facet $F: i_1 \dots  i_{d+1}$ and an integer $j$ with $i_1\leq j\leq i_{d+1}, j\notin F$ such that there is no facet containing $j$ and $i_k$ for all $1\leq k \leq d+1$. We assume that $i_{d+1}$ and $j$ are the largest such integers and look for a contradiction.

Consider a sequence $ r_0, G_1, r_1, \dots , r_{s-1}, G_s, r_s$, with $i_1=r_0$ and $j=r_s$ such that for each $k$ with $1\leq k \leq s$, $r_k$ is a vertex of $\Delta$ and $G_k$ is a facet containing $r_{k-1}$ and $r_k$. Suppose this is a shortest such sequence from $i_1$ to $j$. First of all by descending induction on $k$ we show $r_{k-1}<r_k$  for each $k$ with $1\leq k \leq s$.

For $k=s$ we get to contradiction in each of the following cases and so $r_{s-1}<r_s$.
\begin{itemize}
  \item $r_{s-1}> i_{d+1}$.  Then $r_s=j<i_{d+1}<r_{s-1}$. Since $G_s$ is a facet containing $r_{s-1}, r_s$, if $i_{d+1}$  doesn't belong to any facet together with an element of $G_s$, this contradicts to the choice of $i_{d+1}$ or $j$. So there exists a facet containing $i_{d+1}$ and $j$ since $\Delta$ is proper interval.
  \item $r_{s-1}= i_{d+1}$.  Then $G_s$ is a facet containing $i_{d+1}$ and $j$.
  \item $j<r_{s-1}< i_{d+1}$. Then $i_1<j <r_{s-1}<i_{d+1}$. By the choice of $j$ and unit interval property of $\Delta$, there is a facet, say $H$, containing $i_1, r_{s-1}$. Since $i_1<j<r_{s-1}$, $G_s$ is a facet containing $j,r_{s-1}$ and also $\Delta$ is unit interval interval, $(H\setminus\{r_{s-1}\})\cup \{j\}$ is a facet containing $j$ and $i_1$.
  \item $r_{s-1}=j$. This leads to a shorter sequence from $i_1$ to $j$.
\end{itemize}
Now suppose $1<k<s$ and we have shown $r_{k-1}<\dots < r_s$.  For $r_{k-2}$ in each of the following cases we get to a contradiction and so we should have $r_{k-2}<r_{k-1}$ as desired.
\begin{itemize}
  \item $r_{k-1}<r_k<r_{k-2}$. Then since $r_{k-1}, r_{k-2}\in G_{k-1}, r_k, r_{k-1}\in G_k$ and $\Delta$ is unit interval, $(G_{k-1}\setminus \{r_{k-1}\}) \cup \{r_k\}$ is a facet containing $r_k$ and $r_{k-2}$. This leads to a shorter sequence from $i_1$ to $j$.
  \item $r_{k-1}<r_{k-2}<r_k$. Then since $r_{k-1}, r_k\in G_k, r_{k-1}, r_{k-2}\in G_{k-1}$ and $\Delta$ is unit interval, $(G_k\setminus \{r_{k-1}\}) \cup \{r_{k-2}\}$ is a facet containing $r_k$ and $r_{k-2}$. This leads to a shorter sequence from $i_1$ to $j$.
   \item $r_{k-2}=r_k$ or $r_{k-2}=r_{k-1}$. Then we have a shorter sequence from $i_1$ to $j$.
\end{itemize}
So we have shown
$$i_1=r_0<r_1<\dots <r_{s-1}<r_s=j< i_{d+1}.$$
Therefore $i_1<r_1<i_{d+1}, i_1,i_{d+1}\in F, r_1, i_1\in G_1$ and unit interval property imply $H_1=\{i_2, \dots , i_{d+1}, r_1\}\in \mathcal{F}(\Delta )$. 
Then $r_1<r_2<i_{d+1},  r_1, i_{d+1}\in H_1, r_1,r_2\in G_2$ and unit interval property ensure $H_2=\{i_2, \dots , i_{d+1}, r_2\}\in \mathcal{F}(\Delta )$. By continuing in this way we get to a facet
$H_s=\{i_2, \dots , i_{d+1}, j\}$. This contradiction yields the result.

The last assertion follows from the fact that Condition $*$ is the natural generalization of definition of closed graph. 
\end{proof}

\section{Interval and Proper Interval Graphs}
We now establish equivalent characterizations for interval and proper interval graphs, building on the relationships among our generalized classes.

\begin{theorem}\label{1.4}
\begin{enumerate}
\item Suppose that $G$ is a $d$-strong interval (resp. $d$-strong unit interval, $d$-strong proper interval) graph with no isolated vertex. Then it is $k$-strong interval (resp. $d$-strong unit interval, $d$-strong proper interval) for all $k\geq d$ with the same interval representation.

\item Suppose that $G$ is a $d$-under closed graph. Then it is $k$-under closed for all $k\geq d$ with the same labelling on $V(G)$.

\item Suppose $d>1$ and $G$ is a $d$-unit interval graph. Then it is $k$-unit interval for all $k\geq d$ with the same labelling on $V(G)$.
\end{enumerate}
\end{theorem}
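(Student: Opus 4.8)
The plan is to prove all three parts by induction on the dimension, raising it one step at a time from $d$ to $k$ while keeping the interval representation (part (1)) or the labelling (parts (2), (3)) fixed; thus it suffices to treat the step from $m$ to $m+1$ for each $m\ge d$.

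For part (1), I would first record the elementary fact that for closed intervals on the real line, $\bigcup_{i\in U}I_i$ is an interval if and only if the intersection graph $H$ of the family $\{I_i\}$ has $H[U]$ connected. By Definitions~\ref{1.1}, $G$ being $d$-strong interval with representation $\{I_i\}$ says exactly that $G[U]$ is connected iff $H[U]$ is connected for every $(d+1)$-subset $U$ (the no-isolated-vertex hypothesis guarantees that $\Delta_d(G)$ is genuinely pure $d$-dimensional, so this reading is legitimate). The real content is then the purely graph-theoretic claim that this equivalence propagates to all larger subsets. I would prove, by induction on $|U|$, that $G[U]$ connected $\iff$ $H[U]$ connected for every $U$ with $|U|\ge d+1$: given a connected $G[U]$ with $|U|\ge d+2$, choose a noncut vertex $v$ of $G[U]$ so that $G[U\setminus\{v\}]$ is connected and, by induction, so is $H[U\setminus\{v\}]$; then pick a connected $(d+1)$-subset $W\subseteq U$ containing $v$ and one of its $G$-neighbours, whence $H[W]$ is connected and attaches $v$ to $H[U\setminus\{v\}]$, giving $H[U]$ connected. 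The converse direction is identical after swapping $G$ and $H$, since the hypothesis is symmetric in the two graphs. Consequently the same family $\{I_i\}$ witnesses that $G$ is $k$-strong interval, and because the representation is unchanged the unit-length and proper conditions are preserved verbatim, settling the unit and proper cases simultaneously.

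For parts (2) and (3) there is no interval model, so I would argue directly on $G$ using the combinatorial reformulations already available: Lemma~\ref{global} for under closedness and Lemma~\ref{lem2.4} for the unit interval property. Working with a fixed labelling, it suffices (by Lemma~\ref{global}) to show that if every connected $(m+1)$-set admits the under-closed single shift, replacing its maximum by any in-range label, then so does every connected $(m+2)$-set. Given a connected $(m+2)$-set $F:i_1\dots i_{m+2}$ and a target $j\in[i_1,i_{m+2}]\setminus F$, I would remove a suitable noncut vertex $i_p$ with $p\le m+1$ from $G[F]$, obtaining a connected $(m+1)$-set of maximum $i_{m+2}$; the level-$m$ hypothesis then shifts $i_{m+2}$ to $j$ and yields that $\{i_1,\dots,i_{m+1},j\}\setminus\{i_p\}$ is connected, after which it only remains to reattach $i_p$. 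The step-$(m+1)$ conclusion for the unit interval case follows the same template, but with the element-swaps of Lemma~\ref{lem2.4} in place of the single shift; here the hypothesis $d>1$ is what guarantees that connected $(d+1)$-sets have at least three vertices, leaving room to perform those swaps.

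The main obstacle in parts (2) and (3) is precisely the reattachment step. It can fail only in the degenerate situation where the removed vertex $i_p$ is a leaf of $G[F]$ whose unique neighbour is the maximum $i_{m+2}$, for then shifting $i_{m+2}$ to $j$ threatens to isolate $i_p$. I would dispose of this case by descending to the base level: grow a connected $(d+1)$-subset $S\subseteq F$ containing $i_p$, the maximum $i_{m+2}$, and a label no larger than $j$, in which $i_p$ is still a leaf at $i_{m+2}$; applying the level-$d$ hypothesis to shift $i_{m+2}$ to $j$ inside $S$ forces $i_p$ to acquire $j$ as a neighbour, which is exactly the edge needed to reattach $i_p$. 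The only real care required is the bookkeeping that produces such an $S$ with sufficiently small minimum, together with the symmetric handling of targets $j$ lying to the left of $i_p$; once these are in place, the induction closes and delivers the stated persistence of the under closed and unit interval properties for all $k\ge d$ under the same labelling.
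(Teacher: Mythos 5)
Your part (1) is correct and takes a genuinely cleaner route than the paper. Passing to the intersection graph $H$ of the (closed) intervals turns the strong interval hypothesis into the symmetric statement ``$G[U]$ is connected iff $H[U]$ is connected for every $(d+1)$-set $U$,'' and then a single noncut vertex plus a connected $(d+1)$-subset through it (a subtree of a spanning tree) closes the induction on $|U|$ in both directions at once. The paper argues asymmetrically: its forward direction uses \emph{two} noncut vertices to write the big union as two overlapping interval-unions, and its backward direction runs a case analysis on which intervals attain the left and right endpoints of the union. Your symmetric formulation eliminates that endpoint analysis entirely, and it handles all $k\geq d$ in one induction rather than one dimension step at a time.

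Parts (2) and (3), however, have a genuine gap, located exactly at what you call the main obstacle. Your patch for the leaf-at-the-maximum case requires a connected $(d+1)$-set $S\subseteq F$ containing $i_p$ and $i_{m+2}$ with $\min S< j$, and such an $S$ need not exist: if the only label of $F$ below $j$ is $i_1$, then $S$ must contain $i_1$ together with a path from $i_1$ to $i_{m+2}$ inside $G[F]$, which can take more than $d+1$ vertices. Such configurations turn out to be impossible in a $d$-under closed graph, but ruling them out needs further applications of the hypothesis that you never supply --- that \emph{is} the missing proof. Worse, your ``symmetric handling of targets $j$ lying to the left'' does not exist in part (2): by Definition \ref{1.1}(3) and Lemma \ref{global}, the under-closed move only ever replaces the \emph{maximum} of a facet (the minimum $i_1$ is frozen, and every move keeps it), so when the usable noncut vertex is $i_1$ and $j<i_2$, the level-$m$ shift cannot be applied to $F\setminus\{i_1\}$ and there is no leftward move to substitute for it. The paper's proof shows both problems are artifacts of your decision to reattach $i_p$ via an \emph{edge} of $G$: it splits on whether $G[F]$ has an internal noncut vertex $i_k$ with $1<k<m+2$. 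If yes, it applies the shift to $F\setminus\{i_k\}$ \emph{twice} --- once with target $i_k$, recovering $\{i_1,\dots,i_{m+1}\}$, and once with target $j$, both legal since the minimum is still $i_1$ --- and takes the union of the two connected sets, which share $d\geq 1$ vertices; the removed vertex is restored by a shift, not by an edge, so the degenerate leaf case never arises. If no, the noncut vertices are exactly $i_1$ and $i_{m+2}$, so $G[i_1,\dots,i_{m+1}]$ and $G[i_2,\dots,i_{m+2}]$ are both connected, and one shifts whichever has $j$ in range according as $j<i_{m+1}$ or $j>i_{m+1}$.

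For part (3) the gap is wider still: verifying the unit interval property at level $m+1$ (via Lemma \ref{lem2.4}) requires $(F\setminus\{i_k\})\cup\{j\}$ to be connected for \emph{every} $k$, including cut vertices $i_k$ and tuples $j_1<\dots<j_{m+2}$ that move the minimum and maximum simultaneously, so the choose-a-noncut-vertex-and-reattach template does not transfer. The paper's argument here is a long, delicate case analysis (the path case, the subcase $i_{d+1}<j_{d+2}$, the vertex that is the unique neighbour of $j_{d+2}$, and a symmetric case), and the hypothesis $d>1$ enters concretely to make $\{i_2,\dots,\widehat{i_k},\dots,i_{d+1},j_{d+1},j_{d+2}\}$ a set of size $d+1$ in that analysis --- not merely to ``leave room'' for swaps. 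As written, your treatment of (3) is a placeholder rather than a proof.
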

\begin{proof}
It is enough to prove the results for $k=d+1$ when $d+2\geq |V(G)|$.

(1) Suppose $G$ is a $d$-strong interval graph on $[n]$ for which $\{I_i\}_{i\in [n]}$ is an interval representation. Assume that $1\leq i_1< \dots < i_{d+2}\leq n$. We should prove that $G[i_1, \dots , i_{d+2}]$ is connected if and only if $\bigcup_{1\leq \ell \leq d+2}I_{i_\ell}$ is an interval.

$(\Longrightarrow )$ If $G[i_1, \dots , i_{d+2}]$ is connected, then it has at least two distinct noncut vertices, say $i_k$ and $i_{k'}$  (we point out that each vertex in cycles is noncut, also if $G$ has no cycle, it should have at least two leaves which are certainly noncut vertices). Hence $G[i_1, \dots ,\widehat{i_k}, \dots, i_{d+2}]$ and $G[i_1, \dots ,\widehat{i_{k'}}, \dots, i_{d+2}]$ are connected. Since $d\in \mathbb{N}$ and $G$ is $d$-strong interval,  $\bigcup_{1\leq \ell \leq d+2, \ell\neq k}I_{i_\ell}$ and $\bigcup_{1\leq \ell \leq d+2, \ell\neq k'}I_{i_\ell}$ are some intervals with nonempty intersection. Hence, $\bigcup_{1\leq \ell \leq d+2}I_{i_\ell}=(\bigcup_{1\leq \ell \leq d+2, \ell\neq k}I_{i_\ell}) \cup (\bigcup_{1\leq \ell \leq d+2, \ell\neq k'}I_{i_\ell})$ is also an interval.

$(\Longleftarrow )$ If $\bigcup_{1\leq \ell \leq d+2}I_{i_\ell}$ is an interval, then suppose $\mathrm{left} \bigcup_{1\leq \ell \leq d+2}I_{i_\ell}=\mathrm{left}I_{i_k}$ and  $\mathrm{right} \bigcup_{1\leq \ell \leq d+2}I_{i_\ell}=\mathrm{right}I_{i_{k'}}$ for some $1\leq k,k'\leq d+2$. Now if $k=k'$, then since $d+2\geq 3$, $I_{i_k}$ contains $I_{i_s}$ and $I_{i_{s'}}$ for distinct integers $s$ and $s'$ with $s,s'\in \{1, \dots, \widehat{k}, \dots , d+2\}$. Thus $\bigcup_{1\leq \ell \leq d+2, \ell\neq s}I_{i_\ell}$ and $\bigcup_{1\leq \ell \leq d+2, \ell\neq s'}I_{i_\ell}$ are intervals. Also if $k\neq k'$, then either $\bigcup_{1\leq \ell \leq d+2, \ell\neq k}I_{i_\ell}$ is an interval or $I_{i_k}$ contains $I_{i_s}$ for some $1\leq s \leq d+2$ with $s\neq k$. Similarly either $\bigcup_{1\leq \ell \leq d+2, \ell\neq k'}I_{i_\ell}$ is an interval or $I_{i_{k'}}$ contains $I_{i_{s'}}$ for some $1\leq s' \leq d+2$ with $s'\neq k'$. Hence in each case $\bigcup_{1\leq \ell \leq d+2, \ell\neq t}I_{i_\ell}$ and $\bigcup_{1\leq \ell \leq d+2, \ell\neq t'}I_{i_\ell}$ are intervals for some distinct integers $t$ and $t'$ with $1\leq t,t'\leq d+2$. Therefore since  $G$ is $d$-strong interval, the graphs $G[i_1, \dots , \widehat{i_t}, \dots , i_{d+2}]$ and $G[i_1, \dots , \widehat{i_{t'}}, \dots , i_{d+2}]$ are connected. Hence $d+2\geq 3$ implies $G[i_1, \dots, i_{d+2}]$  is connected as desired.

(2) Suppose $G$ is a $d$-under closed graph with labelling $[n]$ on $V(G)$. We use the equivalent definition for under closed $d$ complexes presented in Lemma \ref{global}. Consider a connected subgraph of $G$ with vertices $i_1<\dots < i_{d+2}$ and an integer $j$ with $i_1\leq j\leq i_{d+2}$ and $j\notin \{i_1, \dots , i_{d+2}\}$.  Then for connected graph $G[i_1, \dots , i_{d+2}]$ the following cases occur:

$\bullet$ There exists a noncut vertex, say $i_k$, for some $k$ with $1< k <d+2$.
Therefore $G[i_1, \dots , \widehat{i_k}, \dots , i_{d+2}]$ is connected. Since $G$ is $d$-global interval, $G[i_1, \dots , i_{d+1}]$ and\linebreak $G[i_1, \dots , \widehat{i_k}, \dots , i_{d+1}, j]$ are connected. So $G[i_1, \dots , i_{d+1}, j]$ is also connected.

$\bullet$ $i_k$ is a cut vertex for all $k$ with $1<k < d+2$. Hence $G[i_1, \dots , i_{d+1}]$ and $G[i_2, \dots , i_{d+2}]$ are connected. If $i_1<j<i_{d+1}$,  since $G$ is $d$-global interval, $G[i_1, \dots , i_d, j]$ is connected. Hence  $G[i_1, \dots , i_{d+1}, j]$ is also connected. Else $i_2<j<i_{d+2}$. Therefore $G[i_2, \dots , i_{d+1}, j]$ and so $G[i_1, \dots , i_{d+1},j]$ is connected.

(3) Suppose $G$ is a $d$-unit interval graph. Assume $1 \leq i_1 < \dots < i_{d+2} \leq n$ and $i_1 \leq j_1 < \dots < j_{d+2} \leq i_{d+2}$ such that $G[i_1, \dots, i_{d+2}]$ is connected. We prove that $G[j_1, \dots, j_{d+2}]$ is connected. Since $G$ is $d$-unit interval, for any facet ${i_1, \dots, i_{d+1}} \in \mathcal{F}(\Delta_d(G))$, every ${j_1', \dots, j_{d+1}'}$ with $i_1 \leq j_1' < \dots < j_{d+1}' \leq i_{d+1}$ is a facet. We consider two cases for $G[i_1, \dots, i_{d+2}]$.

$\bullet$ There exists a noncut vertex $i_k$, for some $1 < k < d+2$. Thus, $G[i_1, \dots, \widehat{i_k}, \dots, i_{d+2}]$ is connected. Since $G$ is $d$-unit interval and $i_1 \leq j_1 < \dots < j_{d+2} \leq i_{d+2}$, any $d+1$ vertices $\{j_{s_1}, \dots, j_{s_{d+1}}\} \subseteq \{j_1, \dots, j_{d+2}\}$ with $j_{s_1} < \dots < j_{s_{d+1}}$ form a facet of $\Delta_d(G)$. For $d \geq 1$, take any two subsets $\{j_1, \dots, \widehat{j_\ell}, \dots, j_{d+2}\}$ and $\{j_1, \dots, \widehat{j_{\ell'}}, \dots, j_{d+2}\}$ for $\ell \neq \ell'$. These are connected, share $d \geq 1$ vertices, and their union is $G[j_1, \dots, j_{d+2}]$, which is connected.

$\bullet$ Every $i_k$ for $1 < k < d+2$ is a cut vertex. Hence, $G[i_1, \dots, i_{d+2}]$ is a path graph with endpoints $i_1, i_{d+2}$, so $G[i_1, \dots, i_{d+1}]$ and $G[i_2, \dots, i_{d+2}]$ are connected. If $j_{d+2} \leq i_{d+1}$, then $i_1 \leq j_1 < \dots < j_{d+2} \leq i_{d+1}$. Since $G[i_1, \dots, i_{d+1}]$ is connected, $G[j_1, \dots, j_{d+1}]$  and $G[j_2, \dots, j_{d+2}]$ are connected by the $d$-unit interval property.  Their union $G[j_1, \dots, j_{d+2}]$ is connected, as they share $j_2, \dots, j_{d+1}$ and $d \geq 1$.

If $i_{d+1} < j_{d+2} \leq i_{d+2}$, consider $G[i_2, \dots, i_{d+1}, j_{d+2}]$. Since $i_2 \leq i_2 < \dots < i_{d+1} < j_{d+2} \leq i_{d+2}$ and $G[i_2, \dots, i_{d+2}]$ is connected, $G[i_2, \dots, i_{d+1}, j_{d+2}]$ is connected. If $j_1 \geq i_2$, then $i_2 \leq j_1 < \dots < j_{d+2} \leq i_{d+2}$, so $G[j_1, \dots, j_{d+2}]$ is connected. If $i_1 \leq j_1 < i_2$, consider $G[j_1, i_2, \dots, i_{d+1}]$. Since $i_1 \leq j_1 < i_2 < \dots < i_{d+1} \leq i_{d+1}$ and $G[i_1, \dots, i_{d+1}]$ is connected, $G[j_1, i_2, \dots, i_{d+1}]$ is connected. Thus, $G[j_1, i_2, \dots, i_{d+1}, j_{d+2}]$ is connected, as it is the union of $G[j_1, i_2, \dots, i_{d+1}]$ and $G[i_2, \dots, i_{d+1}, j_{d+2}]$, sharing $i_2, \dots, i_{d+1}$.

Now if  $i_k$ is a noncut vertex of $G[j_1, i_2 , \dots , i_{d+1}, j_{d+2}]$ for some $2\leq k \leq d+1$, then similar to the aforementioned case the result holds. Suppose $G[j_1, i_2 , \dots , i_{d+1}, j_{d+2}]$ is a path graph with end vertices $j_1$ and $j_{d+2}$. If $i_2\leq j_2<\dots <j_{d+1} \leq i_{d+1}$, then since $G$ is $d$-unit interval and $G[j_1, i_2 , \dots , i_{d+1}, j_{d+2}]$ is a path graph, we should have $G[j_1, \dots , j_{d+1}]$ and $G[j_2, \dots , j_{d+2}]$ are connected. Their union $G[j_1, \dots, j_{d+2}]$ is connected, as they share $j_2, \dots, j_{d+1}$ and $d \geq 1$. So either $j_2<i_2$ or $i_{d+1}<j_{d+1}$.

Assume that $i_{d+1}<j_{d+1}$ and $i_k$ is the only neighbour of $j_{d+2}$ in $G[j_1, i_2 , \dots , i_{d+1}, j_{d+2}]$. Now $G[i_2, \dots , i_{d+1}, j_{d+2}]$ is connected, $i_2\leq i_2< \dots <\widehat{i_k} < \dots < i_{d+1}<j_{d+1}<j_{d+2}\leq j_{d+2}$ and $d$-unit interval property of $G$ implies $G[ i_2, \dots , \widehat{i_k}, \dots , i_{d+1},j_{d+1},j_{d+2}]$ is connected.  Therefore  $j_{d+1}j_{d+2}\in E(G)$ and $G[ i_2, \dots , \widehat{i_k}, \dots , i_{d+1},j_{d+1}]$ is connected. On the other hand  $G[j_1, i_2, \dots , \widehat{i_k}, \dots , i_{d+1}]$ is connected. So $G[j_1, i_2, \dots , \widehat{i_k}, \dots , i_{d+1},j_{d+1}]$ is  connected. Thus  we have  $G[j_1, \dots , j_{d+1}]$ is connected, because $G$ is $d$-unit interval. This shows $G[j_1, \dots , j_{d+2}]$ is connected as required. The case $j_2<i_2$ is
symmetric.
\end{proof}

Sortability of monomials and simplicial complexes, as explored in \cite{Ene+Herzog,Fahimeh,St95}, links combinatorial properties to algebraic structures. We use these ideas to characterize $d$-proper interval graphs. 

 \begin{corollary} \label{B}
For each positive integer $d>1$, the following statements are equivalent and for $d=1$ the statements 1, 3  (resp. 2, 4) are equivalent.
\begin{enumerate}
\item $G$ is a $d$-unit interval graph;
\item $G$ is a $k$-unit interval graph for all $k\geq d$ with the same labelling on $V(G)$;
\item $\mathrm{Ind}_d(G)$ is sortable;
\item $\mathrm{Ind}_k(G)$ is sortable for all $k\geq d$ with the same labelling on $V(G)$.
\end{enumerate}
\end{corollary}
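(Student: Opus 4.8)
The plan is to prove the corollary for $d>1$ by establishing the cycle of implications $(1)\Rightarrow(2)\Rightarrow(4)\Rightarrow(3)\Rightarrow(1)$, with the genuine content concentrated in the single-level equivalence $(1)\Leftrightarrow(3)$; the case $d=1$ will fall out of the same core equivalence, except that the link $(1)\Rightarrow(2)$ is unavailable because the passage from level $d$ to level $d+1$ (Theorem \ref{1.4}(3)) is stated only for $d>1$. The cheap implications come first: $(2)\Rightarrow(1)$ and $(4)\Rightarrow(3)$ are immediate on taking $k=d$, and $(1)\Rightarrow(2)$ is exactly Theorem \ref{1.4}(3), since a labelling witnessing the $d$-unit interval property witnesses the $k$-unit interval property for all $k\geq d$. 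Thus all the difficulty is to relate, at a fixed level, the $d$-unit interval property to sortability of $\mathrm{Ind}_d(G)$.

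Before attacking the core, I would reduce to connected graphs. By Remark \ref{remarks1.2}(4), $G$ is $d$-unit interval if and only if each connected component is; and the minimal nonfaces of $\mathrm{Ind}_d(G)$, namely the connected $(d+1)$-subsets which are precisely the facets of $\Delta_d(G)$, each live inside a single component, so the sorting operator never mixes components and sortability decomposes over them as well. It therefore suffices to show, for connected $G$, that one and the same labelling witnesses the $d$-unit interval property and witnesses sortability of $\mathrm{Ind}_d(G)$.

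For the core equivalence I would work through the interval-convex description of the $d$-unit interval property furnished by Lemma \ref{lem2.4} (and its exchange form \ref{lem2.4}(3)), recording the clean restatement that a $(d+1)$-subset $S$ is a facet exactly when the discrete interval $[\min S,\max S]$ is contained in the span $[\min T,\max T]$ of some facet $T$. For the forward direction (unit interval $\Rightarrow$ sortable) I take two facets $A,B$ and apply the sorting operator: the resulting pair has smallest element $\min(A\cup B)$, largest element $\max(A\cup B)$, and each sorted monomial has span inside $[\min(A\cup B),\max(A\cup B)]$. When the spans of $A$ and $B$ are nested the sorted pair lands inside the larger span and we are done; the substantive case is the staircase overlap, where I would use connectivity together with the unit interval property to manufacture a facet whose span covers $[\min(A\cup B),\max(A\cup B)]$, so that both sorted monomials again lie in facet-spans. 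For the backward direction (sortable $\Rightarrow$ unit interval) I would argue by contradiction from a minimal interval-convexity violation: a facet $F:i_1\dots i_{d+1}$ and an intermediate $j$ failing the exchange condition of Lemma \ref{lem2.4}(3) produces, after sorting $F$ against a bridging facet supplied by connectivity, a monomial outside the generating set, contradicting sortability.

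With $(1)\Leftrightarrow(3)$ in hand at every level, the cycle closes: $(2)\Rightarrow(4)$ follows by applying the core equivalence at each $k$ to the uniform labelling given by $(2)$, and $(3)\Rightarrow(1)$ is the core again, yielding $(1)\Rightarrow(2)\Rightarrow(4)\Rightarrow(3)\Rightarrow(1)$; for $d=1$ the same core gives $(1)\Leftrightarrow(3)$ and, level by level, $(2)\Leftrightarrow(4)$, without the cross-link. I expect the main obstacle to be the staircase case of the forward core direction, where one must conjure a single facet spanning $[\min(A\cup B),\max(A\cup B)]$ from two overlapping facets: this is precisely where connectedness is indispensable, and I anticipate the argument mirroring the shortest-sequence bridging technique used in the proof of Theorem \ref{Prop}, with the exchange step of the backward direction being the second delicate point; the sortability bookkeeping itself I would handle using the standard sorting toolkit of \cite{St95, Ene+Herzog, Fahimeh}.
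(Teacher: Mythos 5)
Your overall skeleton matches the paper's proof, which also rests on Theorem \ref{1.4}(3) for the level-shift $(1)\Rightarrow(2)$ and on a levelwise equivalence between the unit interval property and sortability; the paper, however, simply observes that $\mathrm{Ind}_d(G)$ is the independence complex of $\Delta_d(G)$ and cites \cite[Theorem 1.1]{F+M} for that core equivalence (and \cite[Theorem 8]{Fahimeh} for $d=1$). Your attempt to prove the core $(1)\Leftrightarrow(3)$ in-house contains a genuine gap: you sort the wrong objects. Sortability of $\mathrm{Ind}_d(G)$ quantifies over pairs of \emph{faces} of $\mathrm{Ind}_d(G)$ --- $d$-independent sets --- of each common cardinality, and demands that the sorted pair again be $d$-independent. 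The facets of $\Delta_d(G)$ are precisely the \emph{minimal nonfaces} of $\mathrm{Ind}_d(G)$ (as you yourself note), so when you ``take two facets $A,B$ and apply the sorting operator'' and try to arrange that ``both sorted monomials again lie in facet-spans,'' you are driving toward exactly the wrong conclusion: under a unit-interval labelling, any $d+1$ vertices inside the span $[\min T,\max T]$ of a facet $T$ form a connected set, hence a nonface. The same inversion undermines your backward direction: sorting the violating facet $F$ ``against a bridging facet'' sorts two nonfaces of $\mathrm{Ind}_d(G)$, about which sortability asserts nothing; what is actually required is to manufacture, from a failure of the exchange condition of Lemma \ref{lem2.4}(3), two $d$-independent sets whose sort is \emph{not} $d$-independent, and that construction --- the real content of the cited results --- is absent from your sketch.

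For what it is worth, the correct forward direction needs none of your ``staircase'' machinery and no connectivity: under a unit-interval labelling, a set $C$ is $d$-independent if and only if $|C\cap[\min T,\max T]|\leq d$ for every facet $T$ of $\Delta_d(G)$ (one implication by the unit interval property, the other by extracting a connected $(d+1)$-subset of a large component), and since $|A'\cap[1,m]|=\lceil f(m)/2\rceil$, $|B'\cap[1,m]|=\lfloor f(m)/2\rfloor$ with $f(m)=|A\cap[1,m]|+|B\cap[1,m]|$, one gets $|A'\cap[a,b]|\leq\lceil(|A\cap[a,b]|+|B\cap[a,b]|)/2\rceil\leq d$ on every facet-span, so faces sort to faces at once; your anticipated ``main obstacle'' is an artifact of the object confusion. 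Two smaller points: your component reduction asserts that ``the sorting operator never mixes components,'' which is false for an arbitrary labelling (with components $\{1,3\}$ and $\{2,4\}$, sorting $\{1,2\}$ against $\{3,4\}$ yields $\{1,3\}$ and $\{2,4\}$), so the reduction needs the freedom to relabel with components consecutive --- or is better avoided, as the counting argument is global anyway. The cheap implications, the cycle structure, and your handling of the $d=1$ caveat (no $(1)\Rightarrow(2)$, since Theorem \ref{1.4}(3) assumes $d>1$) are all correct and consistent with the paper.
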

\begin{proof}
Notice $\mathrm{Ind}_d(G)$ is the independence complex of $\Delta_d(G)$ and so the result follows from Theorem \ref{1.4} and \cite[Theorem 1.1]{F+M}. For $d=1$, see \cite[Theorem 8]{Fahimeh}.
\end{proof}

Suppose $S=\mathbb{K}[x_1, \dots , x_n]$ and for any finite set $F\subseteq [n]$, associate the monomial $x^F=\prod_{i\in F}x_i$ in $S$.
The following result, which is a consequence of Corollary \ref{B}, 
and \cite[Corollary 13]{Fahimeh}, prepares examples of  normal Cohen-Macaulay rings.

\begin{corollary}\label{CM}
Suppose that $G$ is a $d$-unit interval graph for some $d>1$. Then for all integers $k$ and $t$ with $k\geq d, 0\leq t\leq \dim (\mathrm{Ind}_k(G))$ the $\mathbb{K}$-algebra
$$\mathbb{K}[x^F : F\in \mathrm{Ind}_k(G) , |F|=t+1],$$
is Koszul and a normal Cohen-Macaulay domain.
\end{corollary}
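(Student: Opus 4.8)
The plan is to obtain the statement as a direct combination of Corollary \ref{B} with the algebraic consequences of sortability recorded in \cite[Corollary 13]{Fahimeh}, so the real work is already done in the preceding combinatorics. The hypothesis is that $G$ is $d$-unit interval for some $d>1$, which is exactly condition $(1)$ in the $d>1$ part of Corollary \ref{B}. I would first invoke the implication $(1)\Rightarrow(4)$ there to conclude that $\mathrm{Ind}_k(G)$ is sortable for every $k\ge d$, and crucially \emph{with the same labelling} on $V(G)$ inherited from the $d$-unit interval labelling. Recalling that $\mathrm{Ind}_k(G)$ is the independence complex of $\Delta_k(G)$, this fixes, uniformly in $k$, a sorting order on the variables $x_1,\dots,x_n$ with respect to which every slice of the face set is closed under the sorting operator.

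With sortability in hand, I would fix an arbitrary $k\ge d$ and an arbitrary $t$ with $0\le t\le \dim(\mathrm{Ind}_k(G))$ and apply \cite[Corollary 13]{Fahimeh} to the sortable complex $\mathrm{Ind}_k(G)$ and the degree $t+1$. The mechanism behind that corollary is the classical theory of sortable sets of monomials of a common degree: the set $\{x^F : F\in \mathrm{Ind}_k(G),\ |F|=t+1\}$, being a fixed-cardinality slice of a sortable complex, is a sortable set of squarefree monomials, so its defining toric ideal has a quadratic Gröbner basis (the sorting relations) with respect to the sorting order. A quadratic Gröbner basis forces the toric algebra to be Koszul; the associated initial ideal is squarefree, which yields normality; and by Hochster's theorem a normal affine semigroup ring is Cohen-Macaulay. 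Being a subalgebra of the polynomial ring $S$, it is automatically a domain. This is precisely the conclusion $\mathbb{K}[x^F : F\in \mathrm{Ind}_k(G),\ |F|=t+1]$ is Koszul and a normal Cohen-Macaulay domain, and since $k$ and $t$ were arbitrary in the stated ranges, the result follows for all admissible $k$ and $t$ at once.

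I do not expect a genuine technical obstacle here, since both ingredients are quotable verbatim; the care lies entirely in hypothesis-matching. Specifically, I would make sure that the notion of \emph{sortable simplicial complex} used in Corollary \ref{B} and in \cite[Corollary 13]{Fahimeh} is literally the same, and that the sorting order is taken with respect to a fixed linear order on the vertices — this is why the phrase ``with the same labelling'' in Corollary \ref{B} matters, as it guarantees a single sorting order works simultaneously for all $k\ge d$ rather than a $k$-dependent one. Once this alignment is confirmed, the proof reduces to the two-step citation ``sortability by Corollary \ref{B}, then Koszulness and normal Cohen-Macaulayness by \cite[Corollary 13]{Fahimeh}'' with no further computation required.
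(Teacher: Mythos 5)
Your proposal matches the paper's proof exactly: the paper derives Corollary \ref{CM} precisely as a two-step citation, combining Corollary \ref{B} (sortability of $\mathrm{Ind}_k(G)$ for all $k\geq d$) with \cite[Corollary 13]{Fahimeh}, which is the same route you take. Your additional unpacking of the mechanism (sorting relations as a quadratic Gr\"obner basis, squarefree initial ideal giving normality, Hochster's theorem giving Cohen-Macaulayness) is correct standard background but not part of the paper's argument, which simply quotes the two results.
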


We provide a characterization of interval graphs, unifying various perspectives.

\begin{theorem}\label{A}
For a simple graph $G$ the following statements are equivalent:
\begin{enumerate}
\item $G$ is an interval graph.

\item $G$ is a $k$-strong interval graph for all $k\in \mathbb{N}$ with the same interval representation.

\item  $G$ is a $k$-under closed graph for all $k\in \mathbb{N}$ with the same labelling on $V(G)$.

\item $G$ is a $1$-under closed graph.
\end{enumerate}
\end{theorem}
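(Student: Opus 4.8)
The plan is to treat Remark \ref{remarks1.2}(2) as the hub that ties all four conditions back to the classical notion of an interval graph, and then to climb from dimension $1$ to every dimension using the two extension results packaged in Theorem \ref{1.4}. Recall that Remark \ref{remarks1.2}(2) identifies interval graphs with exactly the $1$-strong interval graphs and with exactly the $1$-under closed graphs. This immediately yields the equivalence $(1)\Leftrightarrow(4)$, and it simultaneously furnishes the base case $k=1$ from which the inductive extensions in both the strong-interval and the under-closed directions can be launched. So the whole argument reduces to promoting the dimension-$1$ descriptions to all dimensions.

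For $(1)\Rightarrow(3)$ I would argue that, since an interval graph $G$ is $1$-under closed by Remark \ref{remarks1.2}(2), Theorem \ref{1.4}(2) upgrades this to $k$-under closed for every $k\geq 1$ using a single fixed labelling of $V(G)$; as $\mathbb{N}=\{1,2,\dots\}$, this is precisely condition $(3)$. The reverse direction $(3)\Rightarrow(4)$ is the trivial specialization $k=1$, and $(4)\Rightarrow(1)$ is again Remark \ref{remarks1.2}(2). Hence $(1)$, $(3)$, $(4)$ are seen to be equivalent with no delicate analysis; note that $(2)\Rightarrow(3)$ could alternatively be routed through Lemma \ref{1.2}, since the labelling it builds depends only on the (common) interval representation, but this detour is not needed for the logical closure.

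The one step demanding care is $(1)\Rightarrow(2)$, and this is where I expect the main obstacle. From $G$ interval, Remark \ref{remarks1.2}(2) makes $G$ a $1$-strong interval graph, and I would like Theorem \ref{1.4}(1) to promote this to $k$-strong interval for all $k\geq 1$ with the same interval representation. The snag is that Theorem \ref{1.4}(1) is stated only for graphs with no isolated vertex, whereas $(2)$ must hold for arbitrary $G$. To circumvent this I would delete the isolated vertices to obtain $G'$, which is still interval and now has no isolated vertex, apply Theorem \ref{1.4}(1) to obtain one interval representation of $G'$ valid for all $k$, and then re-attach each isolated vertex by assigning it an interval disjoint from all the others and from one another. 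In the resulting representation, any $(k+1)$-subset (with $k\geq 1$) that contains an isolated vertex is simultaneously disconnected in $G$ and has a disconnected union of intervals, so the strong-interval equivalence persists; this is exactly the content of the component decomposition in Remark \ref{remarks1.2}(4). Finally $(2)\Rightarrow(1)$ is immediate by setting $k=1$ and invoking Remark \ref{remarks1.2}(2). Combining $(1)\Leftrightarrow(2)$ with the already-established equivalence of $(1)$, $(3)$, $(4)$ completes the proof.
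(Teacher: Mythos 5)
Your proposal is logically sound, and for the implications $(1)\Rightarrow(2)$ and $(3)\Rightarrow(4)$ it matches the paper exactly (Remark \ref{remarks1.2}(2) plus Theorem \ref{1.4}(1), then specializing $k=1$). The genuine divergence is in the remaining links of the cycle. The paper proves $(2)\Rightarrow(3)$ via Lemma \ref{1.2} (strong interval implies under closed, applied in every dimension), whereas you bypass $(2)$ entirely and get $(1)\Rightarrow(3)$ from the base case in Remark \ref{remarks1.2}(2) together with the dimension-climbing Theorem \ref{1.4}(2); both routes are valid, and yours has the mild advantage of not making $(3)$ depend on $(2)$. More significantly, for the crux implication $(4)\Rightarrow(1)$ you simply invoke Remark \ref{remarks1.2}(2), i.e.\ Olariu's Theorem 4, while the paper deliberately does \emph{not} rest on that citation: it rewrites the argument self-containedly, ordering the maximal cliques $C_1,\dots,C_m$ by $(\min C,\max C)$, showing two distinct maximal cliques cannot share both extremes, and verifying that each vertex's clique set $I_v$ is an interval in this order. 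The paper itself concedes (``in view of \cite[Theorem 4]{Ol} the result holds'') that your citation-based shortcut is logically sufficient, so your proof is correct; what the paper's version buys is independence from \cite{Ol} and \cite{GH}, at the cost of two pages of case analysis. Conversely, your treatment of $(1)\Rightarrow(2)$ is actually more careful than the paper's one-line proof: Theorem \ref{1.4}(1) carries a ``no isolated vertex'' hypothesis that the paper silently ignores, and your patch --- delete isolated vertices, apply Theorem \ref{1.4}(1) to the rest, then reattach each isolated vertex with an interval at positive distance from all others, using the component decomposition of Remark \ref{remarks1.2}(4) --- correctly closes that small gap.
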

\begin{proof}
$(1\Longrightarrow 2)$ follows from Remark \ref{remarks1.2}(2) and  Theorem \ref{1.4}(1).

$(2\Longrightarrow 3)$ follows from Lemma \ref{1.2}.

$(3\Longrightarrow 4)$ It is enough to set $k=1$.

$(4\Longrightarrow 1)$ Although in view of \cite[Theorem 4]{Ol} the result holds, but here we rewrite the proof of the implication  $(ii \Longrightarrow i)$  of \cite[Theorem 4]{Ol} without direct using of \cite[Theorem 2]{GH} as follows:

Firstly fix a labelling $[n]$ on $V(G)$ with the property as in Definition \ref{1.1}(2) for $d=1$. Suppose that $C_1, \dots, C_m$ are all maximal cliques in $G$. If for each $v\in V(G)$ we set $I_v=\{C_i \ | \ v\in C_i\}$, then clearly for all $u,v\in V(G)$, $uv\in E(G)$ if and only if $I_u\cap I_v\neq \emptyset$. So it is enough to prove that $I_v$ is an interval. To this aim set $C_i\prec C_j$ if either $\min C_i <\min C_j$ or $\min C_i=\min C_j, \max C_i<\max C_j$. Hint that two distinct maximal cliques can not have the same minimum and maximum. Because suppose that $C_a$ and $C_b$ are two distinct maximal cliques with the vertices $j_1<\dots <j_a$ and $j'_1< \dots <j'_b$ respectively such that $j_1=j'_1, j_a=j'_b$. Assume that
$$t=\max\{\ell \ | \ 1\leq \ell \leq a, j_\ell\not\in C_b\}, s=\max\{\ell \ | \ 1\leq \ell \leq b, j'_s\not\in C_a\}.$$
 Without loss of generality we may assume that $j_t<j'_s$. Then for each $j_\ell<j'_s$, since $j_\ell j_a\in E(G)$ and $j_\ell<j'_s<j'_b=j_a$, we should have $j_\ell j'_s\in E(G)$. Also for each $j_\ell>j'_s$ since $j_t<j_\ell$ and $C_b$ is a clique, we have $j_\ell j'_s\in E(G)$. Hence adding $j'_s$ to $C_a$ forms a larger clique which is a contradiction. Notice also that if $C_i\prec C_j$, then $\min C_i \leq \min C_j$ and $\max C_i\leq \max C_j$. Because if $\max C_i>\max C_j$, since $\min C_i$ is adjacent to $\max C_i$, it is also adjacent to each vertex in $C_j$ and maximality of $C_j$ ensures that $\min C_i=\min C_j$ which yields to $\max C_i <\max C_j$ which is a contradiction.

Now to show that $I_v$ is an interval assume in contrary that $C_1\prec C_2 \prec C_3$  are three maximal cliques with the vertices $i_{1,1}<\dots < i_{1,k_1}$ and $i_{2,1}<\dots < i_{2,k_2}$ and $i_{3,1}<\dots < i_{3,k_3}$ respectively such that $v$ belongs to $C_1$ and $C_3$ but it doesn't belong to $C_2$. Then we may assume that $i_{2,r}$ is the minimum vertex in $C_2$ which is not adjacent to $v$ for some $1\leq r \leq k_2$. Now the following cases happen all of which lead to contradictions.
\begin{itemize}
\item $i_{2,r}<v<i_{2,r'}$ for some $r'$. Since $i_{2,r}i_{2,r'}\in E(G)$, we should have $i_{2,r}v\in E(G)$.

\item $i_{2,k_2}<v$. Since $C_1\prec C_2$ and $v\in C_1$, we should have $i_{1,k_1}\leq i_{2,k_2}<v\leq i_{1,k_1}$ and so $v=i_{2,k_2}$.

\item $v<i_{2,r}$. Since $C_2\prec C_3$ and $v\in C_3$, we have $v<i_{2,r}\leq i_{2,k_2}\leq i_{3,k_3}$ and $vi_{3,k_3}\in E(G)$. Thus $vi_{2,r}\in E(G)$.
\end{itemize}
\hspace{1cm}
\end{proof}

\section{Some Properties of Variant of Interval Simplicial Complexes}
We now investigate structural properties of our generalized classes, focusing on cycle lengths and forbidden subgraphs.
Recall that a $d$-complex is called chordal if there is a labelling $[n]$ on $V(\Delta )$ such that for any two facets $F$ and $G$ with vertices $i_1< \dots < i_{d+1}$ and $j_1< \dots < j_{d+1}$ with $i_{d+1}=j_{d+1}$, the complex $\Delta$ contains the full $d$-skeleton on $F\cup G$. According to Remark 35 in \cite{B+S+V}, all unit interval simplicial complexes are chordal. So if $G$ is $d$-unit interval, then $\Delta_d(G)$ is chordal and thus it can be checked that $G$ doesn't have any induced cycle with length $\geq d+3$. Moreover recall that a graph is called \textbf{chordal (triangulated)}  if every cycle of length strictly greater than $3$ possesses a chord. Remark that interval graphs are chordal and also proper interval graphs are chordal and clawfree (cf. \cite{BW}, \cite{GH}, \cite{Herzog1} and \cite{R1}). The next two propositions generalize these facts. To this aim firstly we need to generalize the concept of claw.

\begin{definition}
Suppose that $G_1, G_2$ and $G_3$ are distinct connected graphs with at least two and at most $d+1$ vertices. Assume for each $i,j$ with $1\leq i<j\leq 3$, $|V(G_i)\cup V(G_j)| \geq d+1$ and $|V(G_1)\cap V(G_2) \cap V(G_3)|=1$. Then $G_1\cup G_2 \cup G_3$ is called a \textbf{$d$-claw} if each path between these three graphs just passes through their common vertex. Moreover any connected graph with $d+2$ vertices which has only three distinct leaves is called a \textbf{$d$-paw}. A graph is called \textbf{$d$-clawfree} (resp. \textbf{$d$-pawfree}) if it doesn't have any $d$-claw (resp. $d$-paw) as an induced subgraph.
\end{definition}
Note that $1$-claw and $2$-paw are exactly $K_{1,3}$ that is a claw graph. Also note that there is not any $1$-paw. 
\begin{proposition}\label{1.5}
Assume that $G$ is a $d$-under closed (resp. $d$-strong interval, $d$-unit interval) graph. Then $G$ doesn't have any induced cycle with length $\geq d+3$. In particular any interval, proper interval or closed graph is chordal.
\end{proposition}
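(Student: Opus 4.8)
The plan is to reduce all three cases to the $d$-under closed case and then argue by contradiction using the reformulation in Lemma \ref{global}. Every $d$-strong interval graph is $d$-under closed by Lemma \ref{1.2}, and every $d$-unit interval graph is $d$-under closed as well: if $\{i_1 < \dots < i_{d+1}\}$ is a facet and $j_1 = i_1,\, j_2 \le i_2,\, \dots,\, j_{d+1} \le i_{d+1}$, then $i_1 \le j_1 < \dots < j_{d+1} \le i_{d+1}$, so $\{j_1, \dots, j_{d+1}\}$ is a facet by Definition \ref{1.1}(2). (Alternatively, the $d$-unit interval case already follows from the chordality of unit interval complexes noted just before the proposition.) Hence it suffices to show that a $d$-under closed graph $G$ has no induced cycle of length $\geq d+3$.

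To this end I would suppose, for contradiction, that $G$ contains an induced cycle of length $m \geq d+3$ and fix a labelling $[n]$ witnessing $d$-under closedness. Write the cycle as $w_0, w_1, \dots, w_{m-1}$ (indices mod $m$, with edges exactly $w_i w_{i+1}$), chosen so that $w_0$ carries the largest label among the cycle vertices. The $d+1$ consecutive vertices $w_0, w_1, \dots, w_d$ induce a path, hence form a facet $F$ of $\Delta_d(G)$ whose maximum label is $w_0$; symmetrically, $w_0, w_{m-1}, \dots, w_{m-d}$ form a facet with maximum label $w_0$. The idea is to insert the ``opposite'' neighbour of $w_0$ into one of these two facets. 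For the first facet this requires the label of $w_{m-1}$ to lie between $\min\{w_1, \dots, w_d\}$ and $w_0$, and for the second it requires the label of $w_1$ to lie between $\min\{w_{m-1}, \dots, w_{m-d}\}$ and $w_0$; since both upper bounds equal $w_0$ (the global maximum), a short comparison shows at least one lower-bound condition must hold, for otherwise one would have $w_{m-1} < w_1$ and $w_1 < w_{m-1}$ simultaneously.

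Assuming, after possibly reversing orientation, that the first case applies, Lemma \ref{global} with $j = w_{m-1}$ forces $G[\{w_1, \dots, w_d, w_{m-1}\}]$ to be connected. But in the induced cycle the only neighbours of $w_{m-1}$ are $w_0$ and $w_{m-2}$, and since $m \geq d+3$ neither lies in $\{w_1, \dots, w_d\}$; thus $w_{m-1}$ is isolated in this induced subgraph, contradicting connectedness. This settles the main assertion, and the ``in particular'' clause then follows quickly: interval graphs are exactly the $1$-under closed graphs (Remark \ref{remarks1.2}(2)), so they have no induced cycle of length $\geq 4$, i.e.\ they are chordal; proper interval graphs are interval graphs, and closed graphs coincide with proper interval graphs (Remark \ref{remarks1.2}(3) and Theorem \ref{Prop}), so both are chordal as well. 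I expect the only delicate point to be the orientation-and-indexing bookkeeping of the middle paragraph—guaranteeing that the inserted neighbour simultaneously has a label inside the facet's range and is non-adjacent to every retained vertex—and this is exactly where the hypothesis $m \geq d+3$ enters.
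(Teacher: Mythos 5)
Your proof is correct and follows essentially the same route as the paper's: reduce all cases to the $d$-under closed one, take a facet of $d+1$ consecutive cycle vertices whose maximum label is the largest on the cycle, and use Lemma \ref{global} to insert a non-adjacent cycle vertex, contradicting inducedness (the paper anchors the maximum at the end of one arc and splits on where the remaining labels fall, while you anchor two arcs at the maximum vertex and resolve the cases by the neat symmetry $w_{m-1}<w_1$ versus $w_1<w_{m-1}$, which is a mild streamlining rather than a different method). Your explicit verification that $d$-unit interval implies $d$-under closed is also a welcome touch, since the paper's citation of Remark \ref{remarks1.2}(3) for that reduction only addresses the case $d=1$.
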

\begin{proof}
In view of Lemma \ref{1.2}, Remark \ref{remarks1.2}(3) we just prove the assertion when $G$ is $d$-under closed. By the way of contradiction assume there is an induced cycle $C$ with vertices $i_1<\dots <i_k$ for some $k\geq d+3$. Suppose that $C=(j_1, \dots, j_k)$ such that $j_{d+1}=i_{k}$. Let $j_s=\min \{j_1, \dots , j_{d+1}\}$. If $j_s<j_\ell <j_{d+1}$ for some $d+2\leq \ell \leq k-1$, then since $G[j_1, \dots , j_{d+1}]$ is connected, $G[j_1, \dots , j_d, j_\ell]$ should also be connected, a contradiction. Else $j_\ell <j_s$ for all $\ell$ with $d+2\leq \ell \leq k-1$. So since $G[j_{s+1}, \dots , j_{s+d+1}]$ is connected (set $j_r=j_{r '}$ when $r\equiv r' (\text{mod} \ k)$ for some $1\leq r' \leq k$) and $j_{d+2}<j_s < j_{d+1}$, the subgraph $G[j_s, \dots , \widehat{j_{d+1}}, \dots , j_{s+d+1}]$ of $G$ should be connected. This is a contradiction too.
\end{proof}

\begin{proposition}\label{1.7}
Any $d$-unit interval graph is $d$-clawfree and $d$-pawfree.
\end{proposition}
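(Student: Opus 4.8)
The plan is to argue by contradiction, fixing the labelling $[n]$ from Definition \ref{1.1}(2) and exploiting the exchange form of the unit interval property in Lemma \ref{lem2.4}(2): if $F: i_1\dots i_{d+1}$ is a facet of $\Delta_d(G)$ and $i_1\le j\le i_{d+1}$ with $j\notin F$, then $(F\setminus\{i_k\})\cup\{j\}$ is again a facet for \emph{every} $k$. The common engine for both claims is the following impossibility. In a $d$-claw (resp. $d$-paw) the three branches communicate only through the common vertex, resp. through the interior, so that deleting the right vertex $c$ disconnects two branches that carry no edges between them. Thus, once we exhibit a facet $F$ with $c\in F$ that meets two distinct branches, together with an admissible exchange vertex $j$ (i.e.\ $\min F\le j\le\max F$, $j\notin F$), Lemma \ref{lem2.4}(2) forces $(F\setminus\{c\})\cup\{j\}$ to be a facet; but this set still meets two branches while omitting $c$, hence is disconnected --- the desired contradiction. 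For $d=1$ there is no $1$-paw, and $1$-clawfreeness is just $K_{1,3}$-freeness of proper interval (equivalently closed) graphs via Remark \ref{remarks1.2}(3), so throughout we may assume $d\ge 2$.

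The $d$-paw case is clean and needs no positional analysis. Let $H$ be an induced $d$-paw on $d+2$ vertices with exactly three leaves. Since $V(H)$ has only two extremal labels, some leaf $\ell$ satisfies $\min V(H)<\ell<\max V(H)$. Pick a second leaf $\ell'$ with unique neighbour $a'$; because $|V(H)|=d+2\ge 3$ and $H$ is connected, $\deg a'\ge 2$, so $a'$ is not a leaf and $a'\notin\{\ell,\ell'\}$. Set $F=V(H)\setminus\{\ell\}$, which is connected (a leaf is never a cut vertex) and has size $d+1$, so it is a facet containing $a'$ and $\ell'$. As $\ell$ is non-extremal, $\min F\le\ell\le\max F$, so Lemma \ref{lem2.4}(2) applies with exchange vertex $\ell$ and $i_k=a'$, giving that $(F\setminus\{a'\})\cup\{\ell\}=V(H)\setminus\{a'\}$ is a facet. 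But removing $a'$ isolates $\ell'$, so $V(H)\setminus\{a'\}$ is disconnected --- a contradiction.

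For the $d$-claw, write the three graphs as $G_i=G[\{c\}\cup A_i]$, where the petals $A_i$ are pairwise disjoint, nonempty, and carry no edges between distinct petals. Since $|V(G_i)\cup V(G_j)|\ge d+1$, the set $\{c\}\cup A_i\cup A_j$ is connected with at least $d+1$ vertices, so it contains a facet $F$ with $c\in F$ meeting both $A_i$ and $A_j$; this supplies $F$ for any chosen pair of petals. To locate $j$, fix a neighbour $n_i\in A_i$ of $c$ (which exists as each $G_i$ is connected) and order $n_1<n_2<n_3$. When $c$ is extremal among $\{c,n_1,n_2,n_3\}$ the argument is immediate: if $c<n_1$, bridge $A_2,A_3$ into a facet $F$ containing $c,n_2,n_3$ and take $j=n_1$ (then $\min F\le c<n_1<n_2\le\max F$), and symmetrically if $n_3<c$; in each case $j\in A_r$ lies outside $F$ inside $[\min F,\max F]$, and swapping out $c$ yields the contradiction above.

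The main obstacle is the middle configuration $n_1<c<n_2<n_3$ (and its mirror), where a careless choice of bridging pair can leave the third branch-neighbour outside the label interval $[\min F,\max F]$. Here I would use the freedom to choose both the bridging pair and the spread of $F$: one selects $F$ to contain $c$ together with a smallest-labelled and a largest-labelled petal vertex drawn from two petals, so that $[\min F,\max F]$ captures a vertex of the remaining branch, exactly as the facet $\{1,c,6,7\}$ with exchange vertex $2$ defeats the grouping of $K_{1,6}$ as a $3$-claw. Making this spread argument uniform --- producing, in every placement of $c$ and the petals, a connected $(d+1)$-set through $c$ reaching two branches whose label interval strictly contains an outside vertex --- is the delicate part, and it is where Proposition \ref{1.5} is brought in: the absence of induced cycles of length $\ge d+3$ keeps each petal interval-like and rules out the degenerate nestings that would otherwise block the construction.
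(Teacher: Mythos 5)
Your $d$-paw argument is complete and correct, and it is essentially the paper's own exchange trick (the paper deletes the middle leaf $j$ and swaps it back in for its support vertex; you delete a non-extremal leaf $\ell$ and swap it in for the support $a'$ of a second leaf --- same mechanism, equally valid). The $d$-claw half, however, has a genuine gap: you fully treat only the configurations where $c$ is extremal among $\{c,n_1,n_2,n_3\}$, and you explicitly leave the middle configurations $n_1<c<n_2<n_3$ (and its mirror) unfinished, sketching a ``spread'' construction whose uniformity you yourself flag as the delicate unresolved point, propped up by an appeal to Proposition \ref{1.5} (``petals are interval-like''). That appeal is unjustified: the absence of induced cycles of length $\geq d+3$ imposes no interval-like structure on the petals, and you give no argument for how it would rule out the problematic placements. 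As written, the claw case is a plan, not a proof.

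The irony is that the middle configuration needs no new idea; the difficulty is an artifact of your insistence that the bridging facet capture \emph{two} neighbours $n_i,n_j$ together with $c$. Among $n_1,n_2,n_3$, two lie on the same side of $c$, say $c<n_r<n_s$ (the case $n_s<n_r<c$ is symmetric); this covers all four placements of $c$, including the middle ones. Let $A_t$ be the remaining petal and grow the edge $\{c,n_s\}$ to a facet $F\subseteq\{c\}\cup A_s\cup A_t$ (possible since $V(G_s)\cup V(G_t)$ is connected with at least $d+1$ vertices). Then $n_r\notin F$ and $\min F\leq c<n_r<n_s\leq\max F$, so Lemma \ref{lem2.4}(2) --- or Condition $*$ of Theorem \ref{Prop}, since $c$ and $n_r$ lie in a common facet inside $V(G_r)\cup V(G_t)$ --- forces $(F\setminus\{c\})\cup\{n_r\}$ to be a facet; it is disconnected because $F\setminus\{c\}\subseteq A_s\cup A_t$ carries no edge to $n_r$. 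This is exactly the paper's proof: it takes arbitrary vertices $j_k\in V(G_k)\setminus\{i\}$, notes two lie on the same side of $i$ with, say, $i<j_1<j_2$, and builds the facet $H$ inside the branch of the \emph{far} vertex $j_2$ together with the branch \emph{not} containing the middle vertex $j_1$, requiring $H$ to contain only $i$ and $j_2$; the exchange vertex is then automatically trapped in $[\min H,\max H]$. No spread argument, and no use of Proposition \ref{1.5}, is needed.
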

\begin{proof}
Assume $G$ is a $d$-unit interval graph with labelling $[n]$ on $V(G)$. If $G$ has an induced $d$-claw, then there exist connected subgraphs  $G_1, G_2$ and $G_3$ with  $i$ as the only common vertex and distinct vertices $j_k\in V(G_k)$ with $j_k\neq i$ when $k=1,2,3$. Hence $j_r$ lies between $i$ and $j_s$ for some $1\leq r,s \leq 3$. Without loss of generality suppose $i< j_1< j_2$. Now assume $H\in \mathcal{F}(\Delta _d(G))$ contained in $G_2\cup G_3$ containing $i$ and $j_2$.  Since $G$ satisfies Condition $*$ by Theorem \ref{Prop} and possesses a facet of $\Delta_d(G)$ containing $i$ and $j_1$,  $(H\setminus \{i\})\cup \{j_1\}$ must be a facet of $\Delta_d(G)$ which is a contradiction.

If $G$ has an induced $d$-paw, say $P$, with three distinct leaves $i, j$ and $k$. Without loss of generality suppose that $i<j<k$ and $\ell$ is the joint vertex of $j$. Since $i<j<k$, $V(P)\setminus \{j\}$ is a facet of $\Delta_d(G)$ containing $i,k, \ell$ , there is a facet in $\Delta_d(G)$ (say $V(P)\setminus \{i\}$) containing $j$ and $\ell$ and $G$ satisfies $*$, $V(P)\setminus \{\ell\}$ must be connected which is a contradiction.
\end{proof}

Combining Corollary \ref{B} and Propositions \ref{1.5} and \ref{1.7} gives the following corollary. 
Hence the assumption of being a tree in \cite[Corollary 9]{Fahimeh} is redundant.

\begin{corollary}\label{2.5}
If $\mathrm{Ind}_d(G)$ is sortable for some $d>1$, then $G$ is $k$-clawfree and $k$-pawfree for all $k\geq d$ and it doesn't have any induced cycle with length $\geq d+3$. Also if $\mathrm{Ind}(G)$ is sortable, then $G$ is clawfree and chordal.
\end{corollary}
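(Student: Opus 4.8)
The plan is to read off the conclusion by chaining Corollary~\ref{B} with Propositions~\ref{1.5} and~\ref{1.7}, so the argument is almost purely a matter of bookkeeping between the $d>1$ and $d=1$ regimes.

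First I would treat the case $d>1$. Assuming $\mathrm{Ind}_d(G)$ is sortable, the equivalence of statements $(1)$, $(2)$, $(3)$ in Corollary~\ref{B} tells me that $G$ is a $d$-unit interval graph and, in fact, a $k$-unit interval graph for all $k\geq d$ with a common labelling on $V(G)$. Feeding each such $k$ into Proposition~\ref{1.7} yields that $G$ is $k$-clawfree and $k$-pawfree for every $k\geq d$, which is the first half of the statement. For the cycle bound, I apply Proposition~\ref{1.5} to the $d$-unit interval graph $G$ directly; this gives that $G$ contains no induced cycle of length $\geq d+3$, completing the $d>1$ part.

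Next I would handle the remaining assertion at $d=1$. Here sortability of $\mathrm{Ind}(G)=\mathrm{Ind}_1(G)$ is, by the $d=1$ clause of Corollary~\ref{B} (equivalence of statements $1$ and $3$), the same as $G$ being $1$-unit interval. Proposition~\ref{1.7} then shows $G$ is $1$-clawfree, and since a $1$-claw is exactly the graph $K_{1,3}$, this is ordinary clawfreeness (the $1$-pawfreeness being vacuous, as there is no $1$-paw). Finally, Proposition~\ref{1.5} with $d=1$ rules out induced cycles of length $\geq 4$, which is precisely the statement that $G$ is chordal.

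No single step is a genuine obstacle; the only care needed is to respect the bifurcation in Corollary~\ref{B}, where the four statements split into two pairs when $d=1$, and to translate the generalized notions back to their classical counterparts in that case, namely that a $1$-claw is a claw and that the absence of induced cycles of length $\geq 4$ is exactly chordality. One should also note explicitly that the last sentence of the corollary invokes only the equivalence of statements $1$ and $3$ of Corollary~\ref{B}, which is the portion asserted to hold at $d=1$.
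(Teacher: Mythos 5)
Your proposal is correct and matches the paper's argument exactly: the paper derives this corollary in one line by combining Corollary~\ref{B} with Propositions~\ref{1.5} and~\ref{1.7}, and your write-up simply makes explicit the same chaining, including the correct handling of the $d=1$ bifurcation (statements $1$ and $3$ of Corollary~\ref{B}) and the translation of $1$-clawfreeness and the cycle bound into classical clawfreeness and chordality. Nothing further is needed.
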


The corona construction generalizes adding whiskers or complete graphs to a graph \cite{F+H, V}. Suppose $\mathcal{H}$ is a family of simple graphs indexed by vertices of $G$ with disjoint sets of vertices. The \textbf{corona} $Go\mathcal{H}$ of $G$ and $\mathcal{H}$ is the disjoint union of $G$ and $H_x$s with additional edges joining each vertex $x$ of $G$ to all vertices of $H_x$. The following corollary can be immediately gained from Proposition \ref{1.7}.

\begin{corollary}
Assume that $G$ is a graph which has a connected subgraph $G'$ with $d-1$ vertices and $\mathcal{H}$ is a family of simple graphs $H_x$ indexed by $V(G)$ with disjoint sets of vertices such that $\bigcup_{x\in V(G')} V(H_x)$ contains a $1$-independent subset with three elements. Then $Go\mathcal{H}$ is not $d$-unit interval.
\end{corollary}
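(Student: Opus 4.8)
The plan is to argue by contradiction, reproducing the obstruction that underlies Proposition~\ref{1.7}: three pairwise non-adjacent ``claw tips'' that no facet of $\Delta_d(Go\mathcal{H})$ can accommodate after a single swap. Since the hypothesis is vacuous for $d=1$ (the index set $V(G')$ would be empty), I assume $d\ge 2$, so $V(G')$ is nonempty with exactly $d-1$ vertices. First I would invoke Theorem~\ref{Prop}: if $Go\mathcal{H}$ were $d$-unit interval, then $\Delta_d(Go\mathcal{H})$ would satisfy Condition~$*$ for some labelling $[N]$ of $V(Go\mathcal{H})$, which I fix once and for all.

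Next I would record the local structure of the corona. Let $\{a,b,c\}$ be the given $1$-independent triple, relabelled so that $a<b<c$ in the fixed labelling, and let $p(a),p(b),p(c)\in V(G')$ be their parents (each tip lies in some $H_x$ with $x\in V(G')$, and the $H_x$ are disjoint from $V(G)$). The key \emph{leaf property} of the corona is that a tip is adjacent only to its parent and to vertices of its own $H_x$; in particular $a,b,c$ are pairwise non-adjacent, and each is adjacent to its parent. Because $G'$ is connected with $|V(G')|=d-1$, the set $F=V(G')\cup\{a,c\}$ is a connected $(d+1)$-subset, hence a facet of $\Delta_d(Go\mathcal{H})$; likewise $V(G')\cup\{a,b\}$ is a facet containing both $b$ and $p(b)$.

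Finally I would apply Condition~$*$ to the facet $F$ and the vertex $j=b$. Since $a,c\in F$ and $a<b<c$, we have $\min F<b<\max F$ and $b\notin F$, while $V(G')\cup\{a,b\}$ is a facet containing $b$ and $i_k:=p(b)\in F$; thus Condition~$*$ forces $(F\setminus\{p(b)\})\cup\{b\}$ to be a facet. But this set equals $(V(G')\setminus\{p(b)\})\cup\{a,b,c\}$, in which $b$ has lost its only neighbour $p(b)$ and is non-adjacent to both $a$ and $c$; it is therefore disconnected and not a facet. This is precisely the $d$-claw contradiction of Proposition~\ref{1.7}, with branch vertex $p(b)$ and tips $a,b,c$, so $Go\mathcal{H}$ cannot be $d$-unit interval. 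The step I expect to require the most care is checking that the middle tip $b$ is genuinely trapped strictly inside a facet and that the replacement disconnects: the first follows from ordering the three tips by the fixed labelling (which forces $\min F<b<\max F$ once the two extreme tips $a,c$ sit in $F$), and the second is exactly the leaf property, so that deleting $p(b)$ isolates $b$; the fact that $|V(G')|=d-1$ is what makes $F$ have the correct facet size $d+1$.
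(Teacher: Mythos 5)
Your argument is correct, but it does not follow the paper's stated route: the paper disposes of this corollary in one line, asserting it is ``immediately gained from Proposition \ref{1.7}'', i.e.\ the hypotheses are supposed to exhibit a forbidden induced subgraph (a $d$-claw or $d$-paw) inside $Go\mathcal{H}$. You instead bypass the forbidden-subgraph formalism and re-run the engine that proves Proposition \ref{1.7} itself: fix the Condition~$*$ labelling supplied by Theorem \ref{Prop}, form the facet $F=V(G')\cup\{a,c\}$ of $\Delta_d(Go\mathcal{H})$ (here $|V(G')|=d-1$ is exactly what makes $|F|=d+1$), trap the middle tip $b$ strictly between $a$ and $c$, and observe that the swap $(F\setminus\{p(b)\})\cup\{b\}$ forced by Condition~$*$ isolates $b$. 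Every step checks out: the tips lie outside $V(G)$ by the disjointness in the corona construction, $1$-independence makes $a,b,c$ pairwise non-adjacent so $b$ really has no neighbour left after $p(b)$ is removed, $V(G')\cup\{a,b\}$ supplies the facet through $b$ and $p(b)\in F$ that Condition~$*$ demands, and $d\geq 2$ (the case $d=1$ being vacuous, as you note) gives $d+1\geq 3$ so the swapped set is genuinely disconnected. In fact your route is more robust than the paper's citation: when the three tips have distinct parents in $G'$, the natural branches $(Go\mathcal{H})[V(G')\cup\{t\}]$ intersect in all $d-1$ vertices of $G'$ rather than in a single common vertex, so the configuration is not literally a $d$-claw in the sense of the paper's definition, and it need not be a $d$-paw either, since vertices of $G'$ without attached tips can contribute extra leaves (for small ambient graphs an induced $d$-claw or $d$-paw may simply not exist even though the conclusion holds). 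So the paper's ``immediate'' deduction actually conceals a nontrivial verification that your inlined Condition~$*$ argument handles uniformly; what the paper's approach buys is brevity given Proposition \ref{1.7}, while yours buys a self-contained proof valid for all parent configurations.
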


Finally, we apply our results to specific graph classes, providing explicit characterizations.

\begin{corollary}\label{2.4}
\begin{itemize}
\item[a.] If $G=C_n$ with $n\geq 3$, then the followings are equivalent for $d>1$ and a labelling $[n]$ on $V(G)$. When $d=1$, the odd numbers (resp. the even numbers) are equivalent.
\begin{enumerate}
\item $\mathrm{Ind}_d(G)$ is sortable.
\item $\mathrm{Ind}_k(G)$ is sortable for all $k\geq d$.
\item $G$ is $d$-unit interval.
\item $G$ is $k$-unit interval for all $k\geq d$.
\item$G$ is $d$-under closed.
\item $G$ is $k$-under closed for all $k\geq d$.
\item $d\geq n-2$.
\end{enumerate}

\item[b.] If $G$ is a forest, then the following statements are equivalent for $d>1$ and a labelling $[n]$ on $V(G)$. When $d=1$, the odd numbers (resp. the even numbers) are equivalent.
\begin{enumerate}
\item $\mathrm{Ind}_d(G)$ is sortable.
\item $\mathrm{Ind}_k(G)$ is sortable for all $k\geq d$.
\item $G$ is $d$-unit interval.
\item $G$ is $k$-unit interval for all $k\geq d$.
\item $G$ is disjoint union of path graphs or trees with at most $d+1$ vertices.
\end{enumerate}
\end{itemize}

\end{corollary}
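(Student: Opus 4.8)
The plan is to reduce both parts to their genuinely new content by invoking the machinery already in place, and then to supply the two missing structural implications by hand. In both parts the equivalences among the ``unit interval / sortability'' statements come for free: Corollary \ref{B} gives $(1)\Leftrightarrow(2)\Leftrightarrow(3)\Leftrightarrow(4)$ for $d>1$ (and, for $d=1$, exactly the split $(1)\Leftrightarrow(3)$, $(2)\Leftrightarrow(4)$ that produces the odd/even grouping), while Theorem \ref{1.4}(2) yields $(5)\Leftrightarrow(6)$ in part (a). Thus the real work is to tie these classes to the arithmetic condition $(7)$, $d\ge n-2$, for cycles, and to the structural description $(5)$ for forests.

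For part (a) I would prove $(3)\Rightarrow(7)$ and $(5)\Rightarrow(7)$ at once using Proposition \ref{1.5}: a $d$-unit interval (or $d$-under closed) graph has no induced cycle of length $\ge d+3$, and since $C_n$ is itself an induced $n$-cycle this forces $n\le d+2$. For the converse $(7)\Rightarrow(3),(5)$ I would compute $\Delta_d(C_n)$ directly: if $d\ge n-1$ it has at most the single facet $[n]$, a simplex and hence trivially both unit interval and under closed; and if $d=n-2$ then deleting any vertex of $C_n$ leaves the path $P_{n-1}$, so every $(n-1)$-subset is a facet and $\Delta_{n-2}(C_n)$ is the full $(n-2)$-skeleton, for which both defining conditions hold trivially because every $(d+1)$-subset is a facet. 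Chaining these implications with the free equivalences closes the loop; the $d=1$ clause then follows by reading $(3)$ as ``proper interval $=$ closed'' and $(5)$ as ``interval'' via Remarks \ref{remarks1.2}(2),(3), each of which holds for $C_n$ precisely when $n=3$, i.e. precisely when $d\ge n-2$.

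For part (b) the easy direction $(5)\Rightarrow(3)$ follows by noting that a path admits the monotone labelling under which every facet is a block of $d+1$ consecutive vertices, so the defining condition is automatic; that a tree on at most $d+1$ vertices gives a complex with at most one facet; and that a disjoint union of $d$-unit interval graphs is $d$-unit interval by Remark \ref{remarks1.2}(4). The substance is the converse $(3)\Rightarrow(5)$. By Remark \ref{remarks1.2}(4) I may assume $G$ is a single connected tree $T$, relabelled $1,\dots,N$, and must show that if $T$ is $d$-unit interval with $N\ge d+2$ then $T$ is a path. The key step is that \emph{every facet of $\Delta_d(T)$ is a set of $d+1$ consecutive integers}: if a facet spanned an integer interval of length $\ge d+2$, the unit-interval property would force every $(d+1)$-subset of that interval to induce a connected subtree, which is impossible since any sub-forest on $\ge d+2$ vertices has a disconnected $(d+1)$-subset (delete a cut vertex, or split across components). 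Consequently the number of facets is at most the number $N-d$ of length-$(d+1)$ windows, whereas the number of facets equals the number of connected $(d+1)$-subsets of $T$; a leaf-removal induction shows this count strictly exceeds $N-d$ whenever $T$ is a non-path tree on $\ge d+2$ vertices, the base case $N=d+2$ reading ``number of connected $(d+1)$-subsets $=$ number of leaves $\ge 3>2$'' and the inductive step using that attaching a leaf to an internal vertex of a path contributes at least two new such subsets. This contradiction forces $T$ to be a path; the $d=1$ clause then reduces to the classical fact that a proper interval forest is a disjoint union of paths.

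I expect the main obstacle to be exactly this forest converse, and in particular the ``consecutive-window plus counting'' argument, since it cannot be extracted from the forbidden-subgraph Proposition \ref{1.7} alone. For $d\ge 3$ the star $K_{1,d+1}$ is a non-path tree on $d+2$ vertices that is neither a $d$-claw (its branches are single leaves, too small to meet the pairwise size bound $\ge d+1$ while meeting only in the centre) nor a $d$-paw (it has $d+1>3$ leaves), yet it is not $d$-unit interval. Hence the delicate points are establishing that all facets are consecutive windows and making the counting inequality together with its equality case clean; that is where the effort will concentrate.
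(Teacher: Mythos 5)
Your proposal is correct, and on the decisive point it takes a genuinely different route from the paper --- one that, as far as I can check, is actually needed. For part (a) you essentially mirror the paper's skeleton (Corollary \ref{B} for the equivalence of (1)--(4), Proposition \ref{1.5} to force $n\le d+2$, and a direct verification that $\Delta_d(C_n)$ is a simplex for $d\ge n-1$ and the full $(n-2)$-skeleton for $d=n-2$); the only difference is that the paper obtains $(4)\Rightarrow(6)$ by citing \cite[Theorem 50]{B+S+V} and then closes the cycle via $(6)\Rightarrow(5)\Rightarrow(7)\Rightarrow(3)$, whereas you get $(5)\Leftrightarrow(6)$ from Theorem \ref{1.4}(2) and prove $(7)\Rightarrow(5)$ directly, which is more self-contained. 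For part (b) the paper disposes of $(3)\Rightarrow(5)$ with the single line ``follows from Proposition \ref{1.7}'', and your scepticism is justified: your example $K_{1,d+1}$ with $d\ge 3$ is a non-path tree on $d+2$ vertices containing no induced $d$-claw (the pairwise condition $|V(G_i)\cup V(G_j)|\ge d+1$ forces branch sizes $b_i+b_j\ge d$, hence $b_1+b_2+b_3\ge\lceil 3d/2\rceil>d+1$ available leaves) and no induced $d$-paw (it has $d+1>3$ leaves, and its only induced subgraph on $d+2$ vertices is itself), yet it is not $d$-unit interval; so the forbidden-subgraph route alone cannot deliver (5) for $d\ge 3$ (it does suffice for $d=2$, where any degree-$3$ vertex yields a $2$-claw). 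Your replacement argument is sound: the unit-interval condition forces every facet of $\Delta_d(T)$ to occupy $d+1$ consecutive labels, since otherwise every $(d+1)$-subset of an integer window of size $\ge d+2$ would induce a connected subgraph, which fails in any forest (delete a cut vertex, or split across components); hence there are at most $N-d$ facets, while your leaf-removal induction correctly shows a non-path tree on $N\ge d+2$ vertices has strictly more connected $(d+1)$-subsets (base case: the count equals the number of leaves, $\ge 3>2$; path case of the step: an internal attachment vertex lies in at least two size-$d$ windows once $d\ge 2$). Two details to nail down in a full write-up: the inductive step must split according to whether $T-x$ is still a non-path tree, as you indicate; and the reduction to a single component with compressed labels needs a word --- either invoke Remark \ref{remarks1.2}(4), or observe that the entire label window of a facet lies in one component, so the bound ``number of facets $\le N-d$'' can be run in the ambient labelling. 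With those made explicit, your argument not only proves the statement but repairs the paper's own justification of $(3)\Rightarrow(5)$.
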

\begin{proof}
a. $(1 \Longleftrightarrow 2 \Longleftrightarrow 3 \Longleftrightarrow 4)$  Corollary \ref{B}.

$(4 \Longrightarrow 6)$ follows from \cite[Theorem 50]{B+S+V}.

$(6 \Longrightarrow 5)$ is clear.

$(5 \Longrightarrow 7)$ If $(7)$ is violated, then $d+3\leq n$. Thus by Proposition \ref{1.5}, $G$ doesn't have any induced cycle of length $n$ which is a contradiction.

$(7 \Longrightarrow 3)$ By standard labelling $[n]$ on $V(G)$ such that $G=(1, \dots , n)$ one can easily check that $G$ is $d$-unit interval.

b. $(1 \Longleftrightarrow 2 \Longleftrightarrow 3 \Longleftrightarrow 4)$ Corollary \ref{B}.

$(3 \Longrightarrow 5)$ follows from Proposition \ref{1.7}.

$(5 \Longrightarrow 3)$ By standard labelling $[n]$ on $V(P_n)$ such that $E(P_n)=\{ij \ | \ 1\leq i \leq n-1, j=i+1\}$, one can easily check that $P_n$ is $d$-unit interval. Also obviously any graph with at most $d+1$ vertices is $d$-unit interval. Hence the result holds by Remark \ref{remarks1.2}(5).
\end{proof}

\begin{example}
We demonstrate that $d$-unit interval graphs encompass structures beyond cycles and forests. Consider a graph $G$ on vertex set $[4] = \{1, 2, 3, 4\}$ with edge set $E(G) = \{12, 23, 24, 34\}$.  Then by the current labelling, $\Delta_2(G)=\langle 123, 124, 234\rangle$ satisfies Definition 2.1(2). Thus, $\Delta_2(G)$ is 2-unit interval, confirming $G$ is a 2-unit interval graph. The 2-independence complex $\mathrm{Ind}_2(G) = \langle 12, 23, 24, 134 \rangle$ comprises 2-independent sets. By Corollary~\ref{B}, the 2-unit interval property of $G$ implies that $\mathrm{Ind}_k(G)$ is sortable and $G$ is $k$-unit interval for all $k\geq 2$.

Additionally, by assigning intervals $[0, 1]$, $[1, 2]$, $[2, 3]$, $[2, 3]$ to vertices $1, 2, 3, 4$, respectively, $\Delta_2(G)$ is 2-strong unit interval.  By Theorem~\ref{1.4}(1), which likely extends strong unit interval properties to higher $k$, $G$ is $k$-strong unit interval for $k \geq 2$, provided $\Delta_k(G)$ maintains this property. 
 \end{example}






\end{document}